\def\sD{{\mathfrak D}}      
   \def\sH{{\mathfrak H}}   
   \def\sK{{\mathfrak K}}   \def\sL{{\mathfrak L}}
\def\sM{{\mathfrak M}}      
   \def\sT{{\mathfrak T}}
      \def\dC{{\mathbb C}}
      \def\dI{{\mathbb I}}
   \def\dN{{\mathbb N}}   
      \def\dR{{\mathbb R}}
   \def\dT{{\mathbb T}}   \def\dU{{\mathbb U}}
\def\cD{{\mathcal D}}      \def\cF{{\mathcal F}}
\def\cG{{\mathcal G}}   \def\cH{{\mathcal H}}   
\def\cJ{{\mathcal J}}   \def\cK{{\mathcal K}}   \def\cL{{\mathcal L}}
\def\cM{{\mathcal M}}   \def\cN{{\mathcal N}}   
      \def\cR{{\mathcal R}}
   \def\cT{{\mathcal T}}
   \def\bB{{\mathbf B}}
\def\fL{{\mathsf L}}
\def\CM{{\cM}}
\def\RE{{\rm Re\,}}
\def\IM{{\rm Im\,}}
\def\ran{{\rm ran\,}}
\def\cran{{\rm \overline{ran}\,}}
\def\dom{{\rm dom\,}}
\def\mul{{\rm mul\,}}
\def\cdom{{\rm \overline{dom}\,}}
\def\span{{\rm span\,}}
\def\cspan{{\rm \overline{span}\, }}
\def\cmr{{\dC \backslash \dR}}
\def\uphar{{\upharpoonright\,}}
\def\RE{{\rm Re\,}}
\def\IM{{\rm Im\,}}
\def\wt{\widetilde}
\def\wh{\widehat}
\def\f{\varphi}
\newtheorem{theorem}{Theorem}[section]
\newtheorem{proposition}[theorem]{Proposition}
\newtheorem{corollary}[theorem]{Corollary}
\newtheorem{definition}[theorem]{Definition}
\newtheorem{remark}[theorem]{Remark}
\numberwithin{equation}{section}
\numberwithin{equation}{section}
\begin{document}

\title [Transformations of Nevanlinna operator-functions]
{Transformations of Nevanlinna operator-functions and their fixed points}
\author[Yu.M.~Arlinski\u{\i}]{Yu.M. Arlinski\u{\i}}
\address{
Department of Mathematics, Dragomanov National Pedagogical University,
Kiev, Pirogova 9, 01601, Ukraine} \email{yury.arlinskii@gmail.com}

\dedicatory{To Eduard R.~Tsekanovski\u{\i} on the occasion of his 80-th birthday}
\subjclass[2010]
{47A06, 47A56, 47B25, 47B36}

\keywords{Nevanlinna operator-valued function, compressed resolvent, fixed point, block-operator Jacobi matrix, canonical system }

\vskip 1truecm
\thispagestyle{empty}
\baselineskip=12pt
\begin{abstract}
  We give a new characterization of the class ${\bf N}^0_{\mathfrak M}[-1,1]$ of the operator-valued in the Hilbert space ${\mathfrak M}$  Nevanlinna functions that admit representations as compressed resolvents ($m$-functions) of selfadjoint contractions. We consider the automorphism ${\bf \Gamma}:$ $M(\lambda){\mapsto}M_{{\bf \Gamma}}(\lambda):=\left((\lambda^2-1)M(\lambda)\right)^{-1}$ of the class ${\bf N}^0_{\mathfrak M}[-1,1]$ and construct a realization of $M_{{\bf \Gamma}}(\lambda)$ as a compressed resolvent. The unique fixed point of ${\bf\Gamma}$ is the $m$-function of the block-operator Jacobi matrix related to the Chebyshev polynomials of the first kind. We study a transformation ${\bf\wh \Gamma}:$ ${\mathcal M}(\lambda)\mapsto {\mathcal M}_{{\bf\widehat \Gamma}}(\lambda) :=-({\mathcal M}(\lambda)+\lambda I_{\mathfrak M})^{-1}$ that maps the set of all Nevanlinna operator-valued functions into its subset. The unique fixed point $\cM_0$ of ${\bf\wh \Gamma}$ admits a realization as the compressed resolvent of the "free" discrete Schr\"{o}dinger operator ${\bf\widehat J}_0$ in the Hilbert space ${\bf H}_0=\ell^2(\dN_0)\bigotimes\sM$. We prove that ${\mathcal M}_0$ is the uniform limit on compact sets of the open upper/lower half-plane in the operator norm topology of the iterations $\{{\mathcal M}_{n+1}(\lambda)=-({\mathcal M}_n(\lambda)+\lambda I_\mathfrak M)^{-1}\}$ of ${\bf\widehat\Gamma}$. We show that the pair $\{{\bf H}_0,{\bf \widehat J}_0\}$ is the inductive limit of the sequence of realizations $\{\wh{\mathfrak H}_n,\wh A_n\}$ of $\{{\mathcal M}_n\}$. In the scalar case $({\mathfrak M}={\mathbb C})$, applying the algorithm of I.S.~Kac, a realization of iterates $\{{\mathcal M}_n\}$ as $m$-functions of canonical (Hamiltonian) systems is constructed.
\end{abstract}
\maketitle
\section{Introduction and preliminaries}
{\bf Notations.} We use the symbols $\dom T$, $\ran T$, $\ker T$ for
the domain, the range, and the null-subspace of a linear operator
$T$. The closures of $\dom T$, $\ran T$ are denoted by $\cdom T$,
$\cran T$, respectively. The identity operator in a Hilbert space
$\sH$ is denoted by  $I$ and sometimes by $I_\sH$. If $\sL$ is a
subspace, i.e., a closed linear subset of $\sH$, the orthogonal
projection in $\sH$ onto $\sL$ is denoted by $P_\sL.$ The notation
$T\uphar \sL$ means the restriction of a linear operator $T$ on the
set $\sL\subset\dom T$. The resolvent set of $T$ is denoted by
$\rho(T)$. The linear space of bounded
operators acting between Hilbert spaces $\sH$ and $\sK$ is denoted
by $\bB(\sH,\sK)$ and the Banach algebra $\bB(\sH,\sH)$ by
$\bB(\sH).$ Throughout
this paper we consider separable Hilbert spaces over the field $\dC$
of complex numbers. $\dC_{+}/\dC_-$ denotes the open upper/lower half-plane of $\dC$, $\dR_+:=[0,+\infty)$,
$\dN$ is the set of natural numbers, $\dN_0:=\dN\cup\{0\}$.
\vskip 0.3cm
\begin{definition}
\label{nfu}
A $\bB(\sM)$-valued function $M$ is  called a Nevanlinna
function ($R$-function \cite{KacK}, \cite{Shmul71}, Herglotz function \cite{GesTsek2000}, Herglotz-Nevanlinna function \cite{ArlBelTsek2011}, \cite{ArlKlotz2010}) if it is holomorphic outside the real axis, symmetric
$M(\lambda)^*=M(\bar\lambda)$, and satisfies the inequality $\IM
\lambda\, \IM M(\lambda)\ge 0$ for all $\lambda\in\cmr$.
\end{definition}
This class
is often denoted by $\cR[\sM]$.
A more general is the notion of Nevanlinna family, cf. \cite{DHMS06}.
\begin{definition}\label{Nevanlinna}
A family of linear relations $\CM(\lambda)$, $\lambda \in \cmr$, in a
Hilbert space $\sM$ is called a \textit{Nevanlinna family} if:
\begin{enumerate}
\item $\CM(\lambda)$ is maximal dissipative for every $\lambda \in \dC_+$
(resp. accumulative for every $\lambda \in \dC_-$);
\item $\CM(\lambda)^*=\cM(\bar \lambda)$, $\lambda \in \cmr$;
\item for some, and hence for all, $\mu \in \dC_+ (\dC_-)$ the
      operator family $(\CM(\lambda)+\mu I_\sM)^{-1} (\in \bB(\sM))$ is
      holomorphic on $\dC_+ (\dC_-)$.
\end{enumerate}
\end{definition}
 The class of all Nevanlinna families in a Hilbert space $\sM$ is denoted by $\wt R(\sM)$.
Each Nevanlinna familiy $\CM\in\wt R(\sM)$ admits the
following decomposition to the operator part $M_s(\lambda)$,
$\lambda\in\cmr$, and constant multi-valued part $M_\infty$:
\[
  \CM(\lambda)=M_s(\lambda)\oplus M_\infty,\quad
  M_\infty=\{0\}\times\mul \CM(\lambda).
\]
Here $M_s(\lambda)$ is a Nevanlinna family of densely defined
operators in $\sM \ominus \mul \CM(\lambda)$.

A Nevanlinna $\bB(\sM)$-valued function admits the
integral representation, see \cite{KacK}, \cite{Shmul71},
\begin{equation}
\label{INTrep}
 M(\lambda)
  =A+B\lambda+\int_{\dR}\left(\frac{1}{t-\lambda}-\frac{t}{t^2+1}\right)\,
               d\Sigma(t),
 \quad
\int_{\dR}\,\frac{d\Sigma(t)}{t^2+1}\in\bB(\sM),
\end{equation}
where $A=A^*\in\bB(\sM)$, $0\le B=B^*\in\bB(\sM)$, the
$\bB(\sM)$-valued function $\Sigma(\cdot)$ is nondecreasing and $\Sigma(t)=\Sigma(t-0)$. The
integral is uniformly convergent in the strong topology; cf.
\cite{Br}, \cite{KacK}.
The following condition is equivalent to the definition of a $\bB(\sM)$-valued Nevanlinna function $M(\lambda)$ holomorphic on $\cmr$ : the function of two variables
\[
K(\lambda,\mu)=\cfrac{M(\lambda)-M(\mu)^*}{\lambda-\bar \mu}
\]
is a nonnegative kernel, i.e., $\sum\limits_{k,l=1}^n\left( K(\lambda_k,\lambda_l) f_l, f_k\right)\ge 0$
for an arbitrary set of points \\
$\{\lambda_1,\lambda_2,\ldots,\lambda_n\}\subset\dC_+/(\subset\dC_-)$ and an arbitrary set of vectors $\{f_1,f_2,\ldots, f_n\}\subset\sM$.

It follows from \eqref{INTrep} that
\[
B=s-\lim\limits_{y\uparrow\infty}\cfrac{M(iy)}{y}=s-\lim\limits_{y\uparrow\infty}\cfrac{\IM M(iy)}{y},
\]
\[
 \IM M(iy)=B\,y+\int_{\dR}\frac{y}{t^2+y^2}\,d\Sigma(t),
\]
and this implies that $\lim_{y\to \infty} y\IM M(iy)$ exists in the strong resolvent sense as a selfadjoint relation; see e.g.
\cite{BHSW2010}. This limit is a bounded selfadjoint operator if and
only if $B=0$ and $\int_{\dR}\,d\Sigma(t)\in \bB(\sM)$, in which
case
 $s-\lim_{y\to \infty} y\IM M(iy)=\int_{\dR}\,d\Sigma(t).
$ 
In this case one can rewrite the integral representation
\eqref{INTrep} in the form
\begin{equation}
\label{INTrep0}
 M(\lambda)=  E+\int_{\dR}\frac{1}{t-\lambda}\,d\Sigma(t), \quad
 \int_{\dR}\,d\Sigma(t)\in \bB(\sM),
\end{equation}
and $E=\lim_{y\to \infty} M(iy)$ in
$\bB(\sM)$.

The class of $\bB(\sM)$-valued Nevanlinna
functions $M$ with the integral representation \eqref{INTrep0} with
$E=0$ is denoted by $\cR_0[\sM]$.
In this paper we will consider the following subclasses of the class $\cR_0[\sM]$.
\begin{definition}
\label{Nev}
A function $N$ from the class $\cR_0[\sM]$ is said to belong to the class
\begin{enumerate}
\item ${\cN}[\sM]$ if
$s-\lim_{y\to \infty}iy N(iy)=-I_\sM,
$
\item ${{\bf N}}_{\sM}^0$ if $N\in {\cN}[\sM]$ and $N$ is holomorphic at infinity,
\item ${\bf N}_{\sM}^0[-1,1]$ if $N\in {{\bf N}}^{0}_{\sM}$  and is holomorphic outside the interval $[-1,1].$
\end{enumerate}
\end{definition}
Thus, we have inclusions
\[
{\bf N}_{\sM}^0[-1,1]\subset {{\bf N}}_{\sM}^0\subset{\cN}[\sM]\subset \cR_0[\sM]\subset \cR[\sM]\subset \wt R(\sM).
\]

A selafdjoint operator $T$ in the Hilbert space $\sH$ is called \textit{$\sM$-simple}, where $\sM$ is a subspace of $\sH$, if
$
\cspan\{T-\lambda I)^{-1}\sM,\;\lambda\in\dC_+\cup\dC_-\}=\sH.
$ 
If $T$ is bounded then the latter condition is equivalent to $\cspan\{T^n\sM,\; n\in\dN_0\}=\sH.$

The next theorem follows from \cite[Theorem 4.8]{Br} and the Na\u{\i}mark's dilation theorem \cite[Theorem 1, Appendix
I]{Br}, see \cite{AHS2} and \cite{ArlKlotz2010} for the case $M\in{\bf N}^0_\sM$.

\begin{theorem}
\label{BROD} 1) If $M\in {\cN}[\sM]$,
 then there exist a Hilbert space $\sH$ containing $\sM$ as a
subspace and a selfadjoint operator $T$ in $\sH$ such that $T$ is
$\sM$-simple and
\begin{equation}
\label{rep2}
 M(\lambda)=P_\sM (T-\lambda I)^{-1}\uphar\sM.
\end{equation}
for $\lambda$ in the domain of $M$.
If $M\in{\bf N}^0_\sM$, then $T$ is bounded and if $M\in{\bf N}^0_\sM[-1,1],$ then $T$ is a selfadjoint contraction.

2) If $T_1$ and $T_2$ are selfadjoint operators in the Hilbert spaces $\sH_1$ and $\sH_2$, respectively, $\sM$ is a subspace in $\sH_1$ and $\sH_2$, $T_1$ and $T_2$ are $\sM$-simple, and
\[
M(\lambda)=P_\sM (T_1-\lambda I_{\sH_1})^{-1}\uphar\sM= P_\sM (T_2-\lambda I_{\sH_2})^{-1}\uphar\sM,\; \lambda\in\cmr,
\]
then there exists a unitary operator $U$ mapping $\sH_1$ onto $\sH_2$ such that
\[
U\uphar\sM=I_\sM\quad\mbox{and}\quad UT_1=T_2U.
\]
\end{theorem}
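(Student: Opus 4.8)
The plan is to realize $M$ through the Naimark dilation of the operator measure furnished by its integral representation, and to prove the uniqueness assertion by the standard functional-model argument built on the resolvent identity. For the existence statement in part 1), since $M\in{\cN}[\sM]\subset\cR_0[\sM]$ the representation \eqref{INTrep0} applies with $E=0$, so $M(\lambda)=\int_\dR(t-\lambda)^{-1}\,d\Sigma(t)$. Writing $iyM(iy)=\int_\dR iy(t-iy)^{-1}\,d\Sigma(t)$ and passing to the limit gives $s-\lim_{y\to\infty}iyM(iy)=-\int_\dR d\Sigma(t)$, so the normalization $s-\lim_{y\to\infty}iyM(iy)=-I_\sM$ defining ${\cN}[\sM]$ forces $\int_\dR d\Sigma(t)=I_\sM$. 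Thus $\Sigma$ is a normalized $\bB(\sM)$-valued positive operator measure, and Naimark's dilation theorem produces a Hilbert space $\sH\supseteq\sM$ together with an orthogonal (projection-valued) measure $F(\cdot)$ on $\sH$ satisfying $\Sigma(\delta)=P_\sM F(\delta)\uphar\sM$; I would take the minimal such dilation, so that $\sH=\cspan\{F(\delta)\sM\}$. Setting $T=\int_\dR t\,dF(t)$ one obtains a selfadjoint operator with $(T-\lambda I)^{-1}=\int_\dR(t-\lambda)^{-1}\,dF(t)$, and compressing to $\sM$ recovers \eqref{rep2}. Moreover $T$ is $\sM$-simple, because the closed span of $\{(T-\lambda I)^{-1}\sM:\lambda\in\cmr\}$ coincides with $\cspan\{F(\delta)\sM\}$, which equals $\sH$ by minimality.

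The two refinements are then read off the support of $\Sigma$: if $M\in{\bf N}^0_\sM$, holomorphy at infinity forces $\Sigma$ to have compact support, hence $T=\int_\dR t\,dF(t)$ is bounded; if $M\in{\bf N}^0_\sM[-1,1]$, holomorphy outside $[-1,1]$ confines the support of $\Sigma$ to $[-1,1]$, so the spectrum of $T$ lies in $[-1,1]$ and $T$ is a selfadjoint contraction.

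For the uniqueness in part 2), I would define an operator $U$ on the dense manifold $\span\{(T_1-\lambda I_{\sH_1})^{-1}f:\lambda\in\cmr,\ f\in\sM\}$ by $U(T_1-\lambda I_{\sH_1})^{-1}f=(T_2-\lambda I_{\sH_2})^{-1}f$. The decisive point is the identity
\[
\left((T_j-\lambda)^{-1}f,(T_j-\mu)^{-1}g\right)=\frac{1}{\lambda-\bar\mu}\left((M(\lambda)-M(\bar\mu))f,g\right),\qquad f,g\in\sM,
\]
which follows from the resolvent identity, the self-adjointness of $T_j$, and $P_\sM(T_j-\lambda I)^{-1}\uphar\sM=M(\lambda)$; since its right-hand side does not depend on $j$, the map $U$ is a well-defined isometry, and by $\sM$-simplicity it extends to a unitary operator from $\sH_1$ onto $\sH_2$. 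From $iy(T_1-iyI_{\sH_1})^{-1}f\to-f$ as $y\to\infty$ one gets $Uf=f$ for $f\in\sM$, i.e.\ $U\uphar\sM=I_\sM$; and the computation $UT_1(T_1-\lambda)^{-1}f=U(f+\lambda(T_1-\lambda)^{-1}f)=f+\lambda(T_2-\lambda)^{-1}f=T_2U(T_1-\lambda)^{-1}f$ on the dense core yields $UT_1=T_2U$. The only genuine obstacle is the Naimark step: producing the orthogonal spectral measure dilating $\Sigma$ and simultaneously arranging minimality so that $\sM$-simplicity is automatic; once $T$ is in hand, everything else is a direct consequence of the resolvent calculus.
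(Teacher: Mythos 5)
Your proof is correct and takes exactly the route the paper itself indicates: the paper offers no written proof of this theorem but derives it from Na\u{\i}mark's dilation theorem and \cite[Theorem 4.8]{Br}, which is precisely your construction (dilate the normalized measure $\Sigma$ from the integral representation to a minimal spectral measure, read off boundedness and contractivity of $T$ from the support of $\Sigma$, and prove uniqueness via the resolvent-identity isometry fixing $\sM$). All the steps you supply --- the normalization $\int_{\dR}d\Sigma=I_\sM$ forced by the class $\cN[\sM]$, the identification of the two closed spans giving $\sM$-simplicity, and the limit $-iy(T_1-iyI)^{-1}f\to f$ yielding $U\uphar\sM=I_\sM$ --- are the standard details behind that citation.
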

The right hand side in \eqref{rep2} is often called  \textit{compressed resolvent/$\sM$-resolvent/the Weyl function/$m$-function}, \cite{Ber} \cite{GS}.
A representation $M\in {\bf N}^0_\sM$ in the form \eqref{rep2} will be called a \textit{realization of $M$}.

We show in Section \ref{teil2}, that
$M(\lambda)\in{\bf N}^0_\sM[-1,1]  \Longleftrightarrow(\lambda^2-1)^{-1} M(\lambda)^{-1}\in{\bf N}^0_\sM[-1,1].$
It follows that the transformation
\begin{equation}\label{GTR1}
{\bf N}_{\sM}^0[-1,1]\ni M(\lambda)\stackrel{{{\bf \Gamma}}}{\mapsto}M_{{\bf \Gamma}}(\lambda):=\cfrac{M(\lambda)^{-1}}{\lambda^2-1}\in {\bf N}_{\sM}^0[-1,1]
\end{equation}
maps the class ${\bf N}_{\sM}^0[-1,1]$ onto itself and ${{\bf \Gamma}}^{-1}={{\bf \Gamma}}$. In Theorem \ref{ghtj} we construct a realization
of $(\lambda^2-1)^{-1}M(\lambda)^{-1}$ as a compressed resolvent by means of the contraction $T$ that realizes $M$.
 The mapping ${\bf \Gamma}$ has the unique fixed point $M_0(\lambda)=-\cfrac{I_\sM}{\sqrt{\lambda^2-1}}$ that is compressed resolvent $P_{\sM_0}({\bf J}_0-\lambda I)^{-1}\uphar\sM_0 $ of the block-operator Jacobi matrix \begin{equation}\label{chebb1}
{\bf J}_0=\begin{bmatrix} 0 & \cfrac{1}{\sqrt{2}}\,I_\sM & 0 &0   & 0 &
\cdot &
\cdot &\cdot \\
\cfrac{1}{\sqrt{2}}\,I_\sM & 0 & \cfrac{1}{{2}}\,I_\sM & 0 &0& \cdot &
\cdot &\cdot \\
0    & \cfrac{1}{{2}}\,I_\sM & 0 & \cfrac{1}{{2}}\,I_\sM &0& \cdot &
\cdot&\cdot   \\
0    & 0& \cfrac{1}{{2}}\,I_\sM & 0 & \cfrac{1}{{2}} \,I_\sM&0& \cdot &
\cdot
\\
\vdots & \vdots & \vdots & \vdots & \vdots & \vdots & \vdots&\vdots
\end{bmatrix},
\end{equation}
acting in the Hilbert space $\ell^2(\dN_0)\bigotimes\sM,$ and $\sM_0=\sM\oplus\{0\}\oplus\cdots$, see Proposition \ref{fixpoint}.

A selfadjoint linear relation $\wt A$ in the orthogonal sum $\sM\oplus\cK$ is called \textit{minimal with respect to $\sM$}
   (see \cite[page 5366]{DHMS06}) if
\[
\sM\oplus\cK=\cspan\left\{\sM+(\wt A-\lambda I)^{-1}\sM: \lambda\in\rho(\wt A)\right\}.
\]

One of the statements obtained in \cite{DHMS06} in the context of the Weyl family of a boundary relation is the following:
\begin{theorem}\label{readhms}
Let $\cM$ be a Nevanlinna family in the
Hilbert space $\sM$. Then there exists unique up to unitary
equivalence
a selfadjoint linear relation $\wt A$ in the Hilbert space $\sM\oplus\cK$ such that $\wt A$ is minimal with respect to $\sM$ and the equality
\begin{equation}
\label{opexpr1}
\cM(\lambda)=-\left(P_\sM\left(\wt A-\lambda I\right)^{-1}\uphar\sM\right)^{-1}-\lambda I_\sM ,\; \lambda\in\cmr
\end{equation}
holds.
\end{theorem}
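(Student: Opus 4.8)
The plan is to reduce the assertion to a compressed‑resolvent realization of the bounded Nevanlinna function obtained from $\cM$ by the transformation $\widehat\Gamma$, and then to invert that transformation. Set
$m(\lambda):=-(\cM(\lambda)+\lambda I_\sM)^{-1}$, i.e. $m=\cM_{\widehat\Gamma}$. Then the identity \eqref{opexpr1} is precisely $m(\lambda)=P_\sM(\wt A-\lambda I)^{-1}\uphar\sM$, so it suffices to show that $m$ is the compressed resolvent of a selfadjoint relation $\wt A$ in some $\sM\oplus\cK$ that is minimal with respect to $\sM$, and that $\cM$ is recovered as $\cM(\lambda)=-m(\lambda)^{-1}-\lambda I_\sM$.

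First I would check that $m$ is a well-defined bounded Nevanlinna function. For $\lambda\in\dC_+$ the maximal dissipativity of $\cM(\lambda)$ gives, on the operator part, $\IM(\cM(\lambda)+\lambda I_\sM)\ge\IM\lambda\,I>0$, and together with condition~(3) of Definition~\ref{Nevanlinna} this yields $(\cM(\lambda)+\lambda I_\sM)^{-1}\in\bB(\sM)$ for every $\lambda\in\cmr$ (the analogous accumulative estimate holds on $\dC_-$). Writing $W(\lambda)=\cM(\lambda)+\lambda I_\sM$ and using $\IM(W^{-1})=-(W^*)^{-1}(\IM W)W^{-1}$, I obtain $\IM m(\lambda)=(W^*)^{-1}(\IM W)W^{-1}\ge0$ for $\lambda\in\dC_+$, while the symmetry $m(\lambda)^*=m(\bar\lambda)$ and the holomorphy of $m$ are inherited from $\cM$. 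Thus $m\in\cR[\sM]$ and $m$ is bounded.

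The realization of $m$ is the heart of the matter. If $\cM$ is operator-valued with $\mul\cM=\{0\}$ and no linear term, then $s\text{-}\lim_{y\to\infty}iy\,m(iy)=-I_\sM$, so $m\in\cN[\sM]$ and Theorem~\ref{BROD}(1) directly supplies an $\sM$-simple selfadjoint operator $\wt A=T$ in $\sH=\sM\oplus\cK$ with $m(\lambda)=P_\sM(\wt A-\lambda I)^{-1}\uphar\sM$, where $\sM$-simplicity at once gives minimality with respect to $\sM$. The main obstacle is the passage to a \emph{general} Nevanlinna family: there $m$ is degenerate, since $\mul\cM\subseteq\ker m(\lambda)$ and a linear term of $\cM$ forces $iy\,m(iy)\to-P$ with $P$ a proper orthogonal projection, so $m\notin\cN[\sM]$ and Theorem~\ref{BROD} no longer applies verbatim. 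I would handle this by splitting $\cM=M_s\oplus M_\infty$ over $\sM=(\sM\ominus\mul\cM)\oplus\mul\cM$, realizing the operator part $M_s$ by a selfadjoint operator as above, and absorbing the multivalued datum $M_\infty$ and any linear growth into the multivalued part of $\wt A$; this is exactly the relation-valued strengthening of Theorem~\ref{BROD} underlying the cited result of \cite{DHMS06}.

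Finally I would invert $\widehat\Gamma$ and settle uniqueness. On the operator part $m(\lambda)$ is injective with dense range, so $m(\lambda)^{-1}$ exists as a linear relation equal to $-\cM(\lambda)-\lambda I_\sM$, which is \eqref{opexpr1}. Uniqueness up to unitary equivalence then follows from the minimality of $\wt A$ together with the uniqueness assertion Theorem~\ref{BROD}(2): the intertwining unitary fixes $\sM$ pointwise and carries one realization of $m$ onto the other, hence one relation $\wt A$ onto the other, where once more one invokes the relation version of that uniqueness statement.
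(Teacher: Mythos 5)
First, a point of reference: the paper itself gives no proof of Theorem \ref{readhms} --- it is imported verbatim from \cite{DHMS06} --- so your proposal can only be judged on its own merits. Your reformulation is the right one and is exactly how the paper uses the theorem: setting $m(\lambda)=-(\cM(\lambda)+\lambda I_\sM)^{-1}$, the identity \eqref{opexpr1} becomes $m(\lambda)=P_\sM(\wt A-\lambda I)^{-1}\uphar\sM$. Your elementary steps are also correct: maximal dissipativity gives $\|m(\lambda)\|\le|\IM\lambda|^{-1}$ (the paper's estimate \eqref{jwtyrf}), $m\in\cR[\sM]\cap\cR_0[\sM]$, and the relation inverse $-m(\lambda)^{-1}-\lambda I_\sM$ recovers $\cM(\lambda)$.

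The genuine gap is at the heart of the argument. In the general case $C:=-s\text{-}\lim_{y\to\infty}iy\,m(iy)$ is only a contraction, $m\notin\cN[\sM]$, and Theorem \ref{BROD} does not apply --- you say this yourself --- and you then dispose of both existence and uniqueness by invoking ``the relation-valued strengthening of Theorem \ref{BROD} underlying the cited result of \cite{DHMS06}.'' That strengthening \emph{is} Theorem \ref{readhms} in equivalent form, so the proposal is circular exactly where the work lies. Nothing available in the paper closes this: Proposition \ref{opexpr12} does give $m(\lambda)=K^*(T-\lambda I)^{-1}K$ with $K$ a contraction and $T$ selfadjoint and $\cran K$-simple (non-circularly, via Na\u{\i}mark dilation), but passing from the pair $(K,T)$ with $K$ non-isometric to a selfadjoint \emph{relation} $\wt A$ in $\sM\oplus\cK$ with $P_\sM(\wt A-\lambda I)^{-1}\uphar\sM=K^*(T-\lambda I)^{-1}K$ is precisely the nontrivial construction (Proposition \ref{yjdd} covers only bounded block operator matrices, which force $C=I_\sM$); one needs, e.g., the reproducing-kernel model built from the kernel of Remark \ref{ComDil} (\cite{DLS}, \cite{ADW}) or the boundary-relation model of \cite{DHMS06}. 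The same objection applies to uniqueness: Theorem \ref{BROD}(2) is stated for $\sM$-simple operators, and its extension to relations with the minimality $\cspan\{\sM+(\wt A-\lambda I)^{-1}\sM\}=\sM\oplus\cK$ (where adding $\sM$ is essential, since $(\wt A-\lambda I)^{-1}\sM$ need not contain $\sM$ when $\mul\wt A\ne\{0\}$) is asserted rather than proved. Your splitting $\cM=M_s\oplus M_\infty$ and ``absorbing $M_\infty$ and the linear growth into $\mul\wt A$'' is the right heuristic, but as written it is a description of what the missing construction should achieve, not the construction.
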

The equivalent form of \eqref{opexpr1} is
$$P_\sM(\wt A-\lambda I)^{-1}\uphar\sM=-(\cM(\lambda)+\lambda I_\sM)^{-1},\;\lambda\in\cmr.$$
The compressed resolvent $P_\sM(\wt A-\lambda I)^{-1}\uphar\sM$ belongs to the class $\cR_0[\sM]$ and even to its more narrow subclass, see Corollary \ref{ccorr1}.

In Section \ref{ffixx} we consider the following mapping defined on the whole class $\wt R(\sM)$ of Nevanlinna families:
\begin{equation}
\label{denken}
\cM(\lambda)\stackrel{{{\bf {\wh\Gamma}}}}{\mapsto}\cM_{{\bf {\wh\Gamma}}}(\lambda):=-(\cM(\lambda)+\lambda I_\sM)^{-1}, \; \lambda\in\cmr.
\end{equation}
We prove (Theorem \ref{fixpunkt}) that the mapping ${\bf{\wh \Gamma}}$ and each its degree ${\bf{\wh \Gamma}}^k$ has the unique fixed point
$$\cM_0(\lambda)=\cfrac{-\lambda+\sqrt{\lambda^2-4}}{2}\,I_\sM$$
and the sequence of iterations
\[
\cM_1(\lambda)=-(\cM(\lambda)+\lambda I_\sM)^{-1},\; \cM_{n+1}(\lambda)=-(\cM_n(\lambda)+\lambda I_\sM)^{-1},\;n\in\dN,
\]
starting with an arbitrary Nevanlinna family $\cM$, converges to $\cM_0$ in the operator norm topology uniformly on compact sets lying in the open left/right half-plane of the complex plane. The function $\cM_0(\lambda)$ can be realized by the free discrete Schr\"{o}dinger operator given by the block-operator Jacobi matrix
\begin{equation}\label{clschope}
{\bf\wh J_0}=\begin{bmatrix} 0 & I_\sM & 0 &0   & 0 &
\cdot &
\cdot &\cdot \\
I_\sM & 0 & I_\sM & 0 &0& \cdot &
\cdot &\cdot \\
0    & I_\sM & 0 & I_\sM &0& \cdot &
\cdot&\cdot   \\
\vdots & \vdots & \vdots & \vdots & \vdots & \vdots & \vdots&\vdots
\end{bmatrix}
\end{equation}
acting in the Hilbert space $\ell^2(\dN_0)\bigotimes\sM.$
Besides we construct a sequence  $\{\wh\sH_n,\wh A_n\}$ of realizations of functions $\cM_n$ ($\cM_{n}(\lambda)=P_\sM(\wh A_{n-1}-\lambda I)^{-1}\uphar\sM,$ $\lambda\in\cmr$) and show that the  Hilbert space $\ell^2(\dN_0)\bigotimes\sM$ and the block-operator Jacobi matrix ${\bf\wh J_0}$ are the inductive limits of
$\{\wh\sH_n\}$ and $\{\wh A_n\}$, respectively.
Observe that when $\sM=\dC,$ the Jacobi matrices ${\bf J}_0$ and $\cfrac{1}{2}{\bf\wh J}_0$ are connected with Chebyshev polynomials of the first and second kinds, respectively \cite{Ber}.

Let $\cH(t)=\begin{bmatrix}h_{11}(t)&h_{12}(t)\cr h_{21}(t)&h_{22}(t) \end{bmatrix}$ be symmetric and nonnegative $2\times 2$ matrix-function with scalar real-valued entries on $\dR_+$. Assume that $\cH(t)$ is locally integrable on $\dR_+$ and is \textit{trace-normed}, i.e., ${\rm tr}\,\cH(t)=1$ a.e. on $\dR_+.$
Let $\cJ=\begin{bmatrix} 0&-1\cr 1&0\end{bmatrix}$. The system of differential equations
\begin{equation}\label{cansys}
\cJ\cfrac{d\vec x}{dt}=\lambda\cH(t)\vec x(t), \; \vec x(t)=\begin{bmatrix}x_1(t)\cr x_2(t)\end{bmatrix},\; t\in \dR_+,\; \lambda\in\dC,
\end{equation}
is called \textit{ the canonical system with  the Hamiltonian $\cH$} or \textit{the Hamiltonian system}.

The \textit{$m$-function} $m_\cH$ of the canonical system \eqref{cansys} can be defined as follows:
\[
m_\cH(\lambda)=\cfrac{x_2(0,\lambda)}{x_1(0,\lambda)},\; \lambda\in\cmr,
\]
where $\vec x(t,\lambda)$ is the solution of \eqref{cansys}, satisfying
\[
x_1(0,\lambda)\ne 0\quad\mbox{and}\quad\int\limits_{\dR_+}\vec x(t,\lambda)^*\cH(t)\vec x(t,\lambda)dt<\infty.
\]
The $m$-function of a canonical system is a Nevanlinna function. As has been proved by L.~de Branges \cite{deBr}, see also \cite{W}, for each Nevanlinna function $m$ there exists a unique trace-normed  canonical system such that its $m$-function $m_\cH$ coincides with $m.$
In the last Section \ref{LSEC}, applying the algorithm suggested by I.S.~Kac in \cite{Kac1999}, we construct a sequence of Hamiltonians $\{\cH_n\}$ such that the $m$-functions of the corresponding canonical systems coincides with the sequence of the iterates $\{m_n\}$ of the mapping ${\bf\wh\Gamma}$
$$m_1(\lambda)=-\cfrac{1}{m(\lambda)+\lambda}\,,\ldots, m_{n+1}(\lambda)=-\cfrac{1}{m_n(\lambda)+\lambda}\,,\ldots,\;\lambda\in\cmr,$$
where $m(\lambda)$ is a non-rational Nevanlinna function form the class ${\bf N}^0_\dC$. This sequence $\{m_n\}$ converges locally uniformly on $\dC_+/\dC_-$
to the function $m_0(\lambda)=\cfrac{-\lambda+\sqrt{\lambda^2-4}}{2}$ that is the $m$-function of the canonical system with the Hamiltonian
$$\cH_0(t)=\begin{bmatrix}\cos^2(j+1)\cfrac{\pi}{2}&0\cr 0&\sin^2(j+1)\cfrac{\pi}{2}\end{bmatrix},\; t\in [j,j+1)\;\forall j\in\dN_0.$$
For the constructed Hamiltonian $\cH_n$ the property $\cH_n\uphar[0,n+1)=\cH_0\uphar [0,n+1)$ is valid for each $n\in\dN$.
Moreover, our construction shows that for the Hamiltonian $\cH$ such that the $m$-function $m_\cH$ of the corresponding canonical system belongs to the class ${\bf N}^0_\dC$, the Hamiltonian $\cH_{\bf\wh\Gamma}$ of the canonical system having ${\bf\wh\Gamma}(m)$ as its $m$-function, is of the form
\[
\cH_{\bf\wh\Gamma}(t)=\left\{\begin{array}{l}\cH_0(t),\; t\in[0,2)\\
\begin{bmatrix}1&0\cr 0&1\end{bmatrix}-\cH(t-1),\; t\in[2,+\infty) \end{array}\right..
\]

\section{Characterizations of subclasses}\label{teil2}
\subsection{The subclass $\cR_0[\sM]$}
The next proposition is well known, cf.\cite{Br}.
\begin{proposition}
\label{opexpr12}  Let $M(\lambda)$ be a $\bB(\sM)$-valued Nevanlinna
function. Then the following statements are equivalent:
\begin{enumerate}
\def\labelenumi{\rm (\roman{enumi})}
\item $M\in R_0[\sM]$;

\item the function $y\|M(iy)\|$ is bounded on $[1,\infty)$,
\item there exists a strong limit
$s-\lim\limits_{y\to+\infty} iy M(iy)=-C,
$ 
where $C$ is a bounded selfadjoint nonnegative operator in $\sM$;
\item $M$ admits a representation
\begin{equation}
\label{drugghtl}
 M(\lambda)=K^*(T-\lambda I)^{-1}K,\quad \lambda\in\cmr,
\end{equation}
where $T$ is a selfadjoint operator in a Hilbert space $\cK$ and $K\in\bB(\sM,\cK)$; here $\cK$, $T$, and $K$ can be
selected such that $T$ is $\cran K$-simple, i.e.,
$$\cspan\{(T-\lambda)^{-1}\ran
K:\;\lambda\in\cmr\}=\cK.$$

\end{enumerate}
\end{proposition}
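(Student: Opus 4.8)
The plan is to prove the four statements equivalent by establishing the cyclic chain $(\mathrm{i})\Rightarrow(\mathrm{iv})\Rightarrow(\mathrm{iii})\Rightarrow(\mathrm{ii})\Rightarrow(\mathrm{i})$, using the integral representation \eqref{INTrep} and the formula $\IM M(iy)=By+\int_\dR\frac{y}{t^2+y^2}\,d\Sigma(t)$ as the common anchor. Here $\cR_0[\sM]$ (written $R_0[\sM]$ in the statement) is the class defined by \eqref{INTrep0} with $E=0$, i.e. $B=0$, $\int_\dR d\Sigma\in\bB(\sM)$, and $M(\lambda)=\int_\dR(t-\lambda)^{-1}d\Sigma(t)$.

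For $(\mathrm{i})\Rightarrow(\mathrm{iv})$ I would realize the operator-valued measure $\Sigma$ by a Na\u{\i}mark-type dilation. Concretely, form $\cK$ as the completion, modulo null vectors, of compactly supported continuous $\sM$-valued functions under the semi-inner product $(f,g)\mapsto\int_\dR(d\Sigma(t)f(t),g(t))$; let $T$ be multiplication by the variable $t$ (selfadjoint), and let $K\colon\sM\to\cK$ send $h$ to the constant function $h$. Then $\|Kh\|^2=(\int_\dR d\Sigma\,h,h)\le\|\int_\dR d\Sigma\|\,\|h\|^2$ shows $K\in\bB(\sM,\cK)$, and since $(T-\lambda)^{-1}$ acts as multiplication by $(t-\lambda)^{-1}$ we get $K^*(T-\lambda)^{-1}K=\int_\dR(t-\lambda)^{-1}d\Sigma(t)=M(\lambda)$, which is \eqref{drugghtl}. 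To secure the simplicity clause I would restrict to $\cK_0:=\cspan\{(T-\lambda)^{-1}\ran K:\lambda\in\cmr\}$, a subspace reducing $T$; one checks $\ran K\subset\cK_0$ (because $-iy(T-iy)^{-1}Kh\to Kh$ and $\cK_0$ is closed), so $T\uphar\cK_0$ yields a $\cran K$-simple realization still satisfying \eqref{drugghtl}.

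The two middle implications are short. From \eqref{drugghtl}, $iyM(iy)=K^*[\,iy(T-iy)^{-1}\,]K$; since $iy(t-iy)^{-1}+1=t(t-iy)^{-1}$ has modulus bounded by $1$ and tending to $0$ pointwise, dominated convergence against the spectral measure of $T$ gives $iy(T-iy)^{-1}\to -I$ strongly, hence $s-\lim_{y\to\infty}iyM(iy)=-K^*K$; setting $C:=K^*K=C^*\ge0$ (bounded) yields $(\mathrm{iii})$. For $(\mathrm{iii})\Rightarrow(\mathrm{ii})$: the family $\{iyM(iy):y\ge1\}$ is strongly convergent and continuous in $y$, hence strongly bounded, so the uniform boundedness principle gives $\sup_{y\ge1}y\|M(iy)\|=\sup_{y\ge1}\|iyM(iy)\|<\infty$.

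The analytic core is $(\mathrm{ii})\Rightarrow(\mathrm{i})$. Multiplying the formula for $\IM M(iy)$ by $y$ gives $y\,\IM M(iy)=By^2+\int_\dR\frac{y^2}{t^2+y^2}\,d\Sigma(t)$, a sum of two nonnegative operators. Since $\|\IM M(iy)\|\le\|M(iy)\|$, hypothesis $(\mathrm{ii})$ bounds $y\,\IM M(iy)\le C_0 I$ uniformly for $y\ge1$, where $C_0=\sup_{y\ge1}y\|M(iy)\|$; positivity then forces $By^2\le C_0 I$ for all $y$, whence $B=0$, and $\int_\dR\frac{y^2}{t^2+y^2}d\Sigma\le C_0 I$, whence by monotone convergence (applied to the scalar measures $(\Sigma(\cdot)x,x)$) $\int_\dR d\Sigma\le C_0 I\in\bB(\sM)$. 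With $B=0$ and $\int_\dR d\Sigma$ bounded, the passage from \eqref{INTrep} to \eqref{INTrep0} recalled above rewrites $M$ as $E+\int_\dR(t-\lambda)^{-1}d\Sigma$ with $E=\lim_{y\to\infty}M(iy)$; finally $\|M(iy)\|\le C_0/y\to0$ forces $E=0$, so $M\in\cR_0[\sM]$. I expect the realization step $(\mathrm{i})\Rightarrow(\mathrm{iv})$ to be the main obstacle: building the model Hilbert space from the operator measure (null vectors, completion) and verifying that the reduction to $\cK_0$ delivers genuine $\cran K$-simplicity while preserving \eqref{drugghtl}. The other implications are routine once \eqref{INTrep} and elementary operator monotonicity are in hand.
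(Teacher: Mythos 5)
Your proposal is correct. Note, however, that the paper offers no proof of Proposition \ref{opexpr12} at all: it is stated as ``well known'' with a reference to Brodski\u{\i} \cite{Br}, so there is no in-paper argument to compare against. Your chain $(\mathrm{i})\Rightarrow(\mathrm{iv})\Rightarrow(\mathrm{iii})\Rightarrow(\mathrm{ii})\Rightarrow(\mathrm{i})$ is essentially the classical argument one finds in the cited sources: the Na\u{\i}mark-type $L^2(\Sigma)$ model for $(\mathrm{i})\Rightarrow(\mathrm{iv})$, dominated convergence in the spectral integral for $(\mathrm{iv})\Rightarrow(\mathrm{iii})$, uniform boundedness for $(\mathrm{iii})\Rightarrow(\mathrm{ii})$, and the positivity/monotone-convergence analysis of $y\,\IM M(iy)=By^2+\int_\dR\frac{y^2}{t^2+y^2}\,d\Sigma(t)$ for $(\mathrm{ii})\Rightarrow(\mathrm{i})$. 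The only places that deserve an explicit word are routine: multiplication by $t$ is only essentially selfadjoint on the compactly supported functions (one passes to its closure, checking $(t\pm i)$ maps that domain onto a dense set); the constant function $Kh$ is not compactly supported, but lies in the completion precisely because $\int_\dR d\Sigma\in\bB(\sM)$; and your observation that $-iy(T-iy)^{-1}Kh\to Kh$ puts $\ran K$ inside $\cK_0$ is exactly what makes the reduction to $\cK_0$ preserve \eqref{drugghtl} while delivering $\cran K$-simplicity.
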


\begin{proposition}\label{yjdd} \cite[Lemma 2.14, Example 6.6]{DHMS06}.
Let $\cK$ and $\sM$ be Hilbert spaces, let $K\in\bB(\sM,\cK)$ and let $D$ and $T$ be selfadjoint operators in $\sM$ and $\cK$, respectively. Consider a selfadjoint operator $\wt A$ in the Hilbert space $\sM\oplus\cK$ given by the block-operator matrix
\[
\wt A=\begin{bmatrix} D&K^*\cr K& T\end{bmatrix},\; \dom\wt A=\dom D\oplus\dom T.
\]
Then $\wt A$ is $\sM$-minimal if and only if  $T$ is $\cran K$-simple.
\end{proposition}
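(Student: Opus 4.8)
The plan is to reduce the $\sM$-minimality of $\wt A$ to a simplicity statement for $T$ by computing the $\cK$-component of the resolvent of $\wt A$ on vectors coming from $\sM$. Since $K$ is bounded, the off-diagonal part of $\wt A$ is a bounded selfadjoint operator, so $\wt A$ is selfadjoint and $\cmr\subset\rho(\wt A)$. The closed span in the definition of minimality is nothing but the smallest reducing subspace of $\wt A$ containing $\sM$, and this does not change if $\rho(\wt A)$ is replaced by $\cmr$; hence $\wt A$ is $\sM$-minimal iff $\cspan\{\sM+(\wt A-\lambda I)^{-1}\sM:\lambda\in\cmr\}=\sM\oplus\cK$. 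Because $\sM\oplus\{0\}$ already lies in this span, I may subtract the $\sM$-component of each resolvent vector, so the condition becomes $\cspan\{P_\cK(\wt A-\lambda I)^{-1}\sM:\lambda\in\cmr\}=\cK$.

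Next I would read off $P_\cK(\wt A-\lambda I)^{-1}$ on $\sM$ directly from the block structure. Writing $(u,v)=(\wt A-\lambda I)^{-1}(f,0)$ with $f\in\sM$, the two rows of $(\wt A-\lambda I)(u,v)=(f,0)$ give $(D-\lambda)u+K^*v=f$ and $Ku+(T-\lambda)v=0$, whence $v=-(T-\lambda I)^{-1}Ku$ and $u=M(\lambda)f$, where $M(\lambda):=P_\sM(\wt A-\lambda I)^{-1}\uphar\sM$. Therefore
\[
P_\cK(\wt A-\lambda I)^{-1}(f,0)=-(T-\lambda I)^{-1}K\,M(\lambda)f ,
\]
and the minimality condition is equivalent to $\cspan\{(T-\lambda I)^{-1}K\,M(\lambda)f:f\in\sM,\ \lambda\in\cmr\}=\cK$.

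The decisive step is to identify this span with $\cspan\{(T-\lambda I)^{-1}\ran K:\lambda\in\cmr\}$, which is exactly the $\cran K$-simplicity of $T$. As $f$ ranges over $\sM$ the vector $M(\lambda)f$ ranges over $\ran M(\lambda)$, and I claim this range is dense in $\sM$. Indeed, from $u=M(\lambda)f$ together with the first row one gets $\ran M(\lambda)=\dom D$, which is dense because $D$ is selfadjoint; alternatively $\overline{\ran M(\lambda)}=(\ker M(\bar\lambda))^{\perp}=\sM$, since the identity $\IM(M(\mu)f,f)=\IM\mu\,\|(\wt A-\mu I)^{-1}(f,0)\|^{2}$ forces $M(\mu)$ to be injective for every nonreal $\mu$. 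Consequently $\{K\,M(\lambda)f:f\in\sM\}$ is dense in $\overline{\ran K}$, and since $\lambda\in\rho(T)$ the operator $(T-\lambda I)^{-1}$ is a homeomorphism of $\cK$, so it carries dense sets to dense sets and respects closures. Hence for each fixed $\lambda$ the closed spans of $(T-\lambda I)^{-1}K\,M(\lambda)\sM$ and of $(T-\lambda I)^{-1}\ran K$ coincide; taking the closed span over all $\lambda\in\cmr$ gives the claimed equality, and therefore $\wt A$ is $\sM$-minimal iff $T$ is $\cran K$-simple.

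I expect the main obstacle to be the treatment of the factor $M(\lambda)$ when $D$ is unbounded: in that case $M(\lambda)$ is injective with dense but proper range and is \emph{not} boundedly invertible, so one cannot simply cancel it, and the equality of spans must be obtained through the density of $\ran M(\lambda)$ and the homeomorphism property of the resolvent of $T$. A secondary technical point is justifying the reduction of the minimality span from $\rho(\wt A)$ to $\cmr$, which rests on the analyticity of the resolvent and the characterization of the generated reducing subspace.
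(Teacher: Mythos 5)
Your proof is correct and takes essentially the same route as the paper: both compute the off--diagonal block of $(\wt A-\lambda I)^{-1}$ through the Schur complement $V(\lambda)=\lambda I_\sM-D+K^*(T-\lambda I)^{-1}K$ (your $M(\lambda)$ is $-V(\lambda)^{-1}$) and then strip that factor off to reduce minimality to the $\cran K$-simplicity of $T$; the paper merely phrases the reduction dually, via $\bigcap_{\lambda\in\cmr}\ker\bigl(K^*(T-\lambda I)^{-1}\bigr)=\{0\}$, which uses only the injectivity of $V(\lambda)^{-1}$ and so avoids your density-of-range discussion. One cosmetic slip: when $T$ is unbounded, $(T-\lambda I)^{-1}$ is not a homeomorphism of $\cK$ (its inverse is unbounded), but your argument only uses its continuity to pass closures through, so the conclusion is unaffected.
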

\begin{proof}
Our proof is based on the Schur-Frobenius formula for the resolvent $(\wt A-\lambda I)^{-1}$
\begin{multline}
\label{Sh-Fr1}
(\wt A-\lambda I)^{-1}=
\begin{bmatrix}-V(\lambda)^{-1}&V(\lambda)^{-1}K^*(T-\lambda I)^{-1}\cr
(T-\lambda I)^{-1}KV(\lambda)^{-1}&(T-\lambda I)^{-1}\left(I_\cK-KV(\lambda)^{-1}K^*(T-\lambda I)^{-1}\right)
\end{bmatrix},\\
V(\lambda):=\lambda I_\sM-D+K^*(T-\lambda I)^{-1}K,\;
\lambda\in\rho(T)\cap\rho(\wt A).
\end{multline}
Actually, \eqref{Sh-Fr1} implies the equivalences
\begin{multline*}
\cspan\left\{\sM+(\wt A-\lambda I)^{-1}\sM: \lambda\in\cmr\right\}
=\sM\oplus\cK\\
\Longleftrightarrow\cK\bigcap\limits_{\lambda\in\cmr}\ker
\left(P_\sM(\wt A-\lambda I)^{-1}\right)=\{0\}\Longleftrightarrow\bigcap\limits_{\lambda\in\cmr}\ker\left(K^*(T-\lambda I)^{-1}\right)=\{0\}\\
\Longleftrightarrow\cspan\{(T-\lambda)^{-1}\ran K:\;\lambda\in\cmr\}=\cK.
\end{multline*}
\end{proof}

In the sequel we will use the following consequence of \eqref{Sh-Fr1}:
\begin{equation} \label{Sh-Fr2}
P_\sM(\wt A-\lambda I)^{-1}\uphar\sM=-\left(-D+K^*(T-\lambda I_\sM)^{-1} K+\lambda I_\sM\right)^{-1},\; \lambda\in\rho(T)\cap\rho(\wt A).
\end{equation}
\begin{proposition}\label{opexpr12b} cf. \cite[the proof of Theorem 3.9]{DHMS06}. For a $\bB(\sM)$-valued Nevanlinna function $M$ the following
statements are equivalent:
\begin{enumerate}
\def\labelenumi{\rm (\roman{enumi})}
\item the limit value
 $C:=-s-\lim\limits_{y\to+\infty}iy M(iy)$
satisfies $ 0\leq C \leq I_\sM$;
\item $M$ admits a representation
\begin{equation}
\label{drugght2} M(\lambda)=P_\sM(\wt A-\lambda I)^{-1}\uphar
\sM,\;\lambda\in\cmr,
\end{equation}
where $\wt A$ is a selfadjoint linear relation in a Hilbert space $\sH\supset
\sM$ and $P_\sM$ is the orthogonal projection from $\sH$ onto $\sM$;
\item $M$ admits a representation \eqref{drugghtl}
with a contraction $K\in\bB(\sM,\wt\sH)$;

\item the following inequality holds
\[
 \frac{\IM M(\lambda)}{\IM \lambda}-M(\lambda)M(\lambda)^*\geq 0,\quad \lambda\in\cmr.
\]
\end{enumerate}
In (ii) $\sH$ and $\wt A$ can be selected such that $\wt A$ is minimal w.r.t. $\sM.$
 Moreover,
$\wt A$ in \eqref{drugght2} can be taken to be a selfadjoint operator if
and only if $C=I_\sM$.
The operator $K$ in (iii) is an isometry if and only if $C=I_\sM$.
\end{proposition}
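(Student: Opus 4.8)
The plan is to run the cycle $(\mathrm{i})\Leftrightarrow(\mathrm{iii})\Rightarrow(\mathrm{iv})\Rightarrow(\mathrm{i})$ and $(\mathrm{iii})\Rightarrow(\mathrm{ii})\Rightarrow(\mathrm{i})$, with everything organized around the identification of the limit operator $C$. I would begin with $(\mathrm{i})\Leftrightarrow(\mathrm{iii})$. If $(\mathrm{i})$ holds, the mere existence of the strong limit $-C=s-\lim_{y\to+\infty}iyM(iy)$ already places $M$ in $\cR_0[\sM]$ by Proposition~\ref{opexpr12}(iii), so Proposition~\ref{opexpr12}(iv) supplies a representation $M(\lambda)=K^*(T-\lambda I)^{-1}K$ with $T$ selfadjoint and $\cran K$-simple and $K\in\bB(\sM,\cK)$. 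A spectral-calculus computation gives $iy(T-iyI)^{-1}\to-I_\cK$ strongly, whence $s-\lim iyM(iy)=-K^*K$ and therefore $C=K^*K$. Thus $C\le I_\sM$ is equivalent to $\|K\|\le1$, i.e. to $K$ being a contraction, which is $(\mathrm{iii})$; the converse is the same identity read backwards. This identity $C=K^*K$ is reused throughout and immediately settles the last clause ``$K$ is an isometry $\Leftrightarrow C=I_\sM$''.

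Next I would fold in $(\mathrm{iv})$. For $(\mathrm{iii})\Rightarrow(\mathrm{iv})$, write $R_\lambda=(T-\lambda I)^{-1}$ and use the resolvent identity together with $R_\lambda^*=(T-\bar\lambda I)^{-1}$ to obtain $\IM M(\lambda)/\IM\lambda=K^*R_\lambda R_\lambda^*K$ and $M(\lambda)M(\lambda)^*=K^*R_\lambda KK^*R_\lambda^*K$; subtracting,
\[
\frac{\IM M(\lambda)}{\IM\lambda}-M(\lambda)M(\lambda)^*=(R_\lambda^*K)^*(I_\cK-KK^*)(R_\lambda^*K)\ge 0,
\]
since $\|K\|\le1$ forces $KK^*\le I_\cK$. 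For the return $(\mathrm{iv})\Rightarrow(\mathrm{i})$ I evaluate $(\mathrm{iv})$ at $\lambda=iy$ and take norms: using $0\le y^2M(iy)M(iy)^*\le y\,\IM M(iy)$ and $\|\IM M\|\le\|M\|$ one gets $y^2\|M(iy)\|^2\le\|y\,\IM M(iy)\|\le y\|M(iy)\|$, hence $y\|M(iy)\|\le1$ on $[1,\infty)$. By Proposition~\ref{opexpr12}(ii) this gives $M\in\cR_0[\sM]$, so $C$ exists, and $\|C\|\le\limsup\|iyM(iy)\|\le1$; together with $C\ge0$ this is $(\mathrm{i})$. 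This self-bounding step is the only subtle point in the $(\mathrm{iv})$-equivalence.

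The heart of the proof is $(\mathrm{iii})\Rightarrow(\mathrm{ii})$. Set $D_K:=(I_\sM-K^*K)^{1/2}$, $\sD:=\cran D_K$, and $\sH:=\cK\oplus\sD$, and define $W\colon\sM\to\sH$ by $Wu:=Ku\oplus D_Ku$. Since $\|Wu\|^2=(K^*Ku,u)+(D_K^2u,u)=\|u\|^2$, $W$ is isometric, so $\sM$ may be identified with the subspace $W\sM\subset\sH$. Let $\wt A:=T\oplus(\{0\}\times\sD)$, the orthogonal sum of the selfadjoint operator $T$ on $\cK$ with the purely multivalued selfadjoint relation on $\sD$; this is a selfadjoint relation in $\sH$ with resolvent $(\wt A-\lambda I)^{-1}=(T-\lambda I)^{-1}\oplus0$. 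A direct computation gives $W^*(\wt A-\lambda I)^{-1}W=K^*(T-\lambda I)^{-1}K=M(\lambda)$, which transported through the isometry $W$ reads $P_{W\sM}(\wt A-\lambda I)^{-1}\uphar W\sM=M(\lambda)$, i.e. $(\mathrm{ii})$. Choosing $T$ to be $\cran K$-simple makes $\wt A$ minimal with respect to $\sM$: the vectors $(T-\lambda I)^{-1}Ku$ span $\cK\oplus\{0\}$, and then the second components $D_Ku$ of $W\sM$ span $\{0\}\oplus\sD$, so $\cspan\{W\sM+(\wt A-\lambda I)^{-1}W\sM\}=\sH$.

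Finally I would close the loop and settle the operator clause. For $(\mathrm{ii})\Rightarrow(\mathrm{i})$, any selfadjoint relation satisfies $iy(\wt A-iyI)^{-1}\to-P_{\cdom\wt A}$ strongly (the resolvent annihilates $\mul\wt A=(\dom\wt A)^\perp$ and tends to $-I$ on its operator part), whence $C=P_\sM P_{\cdom\wt A}\uphar\sM$ and $0\le(Cu,u)=\|P_{\cdom\wt A}u\|^2\le\|u\|^2$, i.e. $0\le C\le I_\sM$. The relation $\wt A$ is an operator exactly when $\mul\wt A=\{0\}$; in the construction above this happens iff $\sD=\{0\}$ iff $D_K=0$ iff $C=K^*K=I_\sM$, and conversely for a minimal $\wt A$ the equality $C=P_\sM P_{\cdom\wt A}\uphar\sM=I_\sM$ forces $\sM\subset\cdom\wt A$ and hence, by minimality, $\mul\wt A=\{0\}$. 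The main obstacle is precisely the construction in $(\mathrm{iii})\Rightarrow(\mathrm{ii})$: one must genuinely pass to a selfadjoint \emph{relation} and carry the defect mass $I_\sM-K^*K$ into the multivalued part via $\sD$, then verify both the resolvent identity through $W$ and the minimality; the remaining implications reduce to the identity $C=K^*K$ and the spectral limit $iy(T-iyI)^{-1}\to-I$.
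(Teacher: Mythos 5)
Your proof is correct, but it takes a genuinely different route from the paper's at the two places that matter. For the implication into (ii) the paper does not construct $\wt A$ by hand: it shows that $\cM(\lambda):=-M(\lambda)^{-1}-\lambda I_\sM$ is a Nevanlinna family and then invokes the realization theorem for Nevanlinna families (Theorem \ref{readhms}, from \cite{DHMS06}) to produce the selfadjoint relation, using this both for (iv)$\Rightarrow$(ii) and (iii)$\Rightarrow$(ii); you instead build $\wt A$ explicitly from the contractive representation (iii) by adjoining the purely multivalued selfadjoint relation $\{0\}\times\sD$ on the defect space $\sD=\cran(I_\sM-K^*K)^{1/2}$ and embedding $\sM$ isometrically via $u\mapsto Ku\oplus D_Ku$. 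This is more elementary and self-contained, and it yields the minimality statement and the clause ``$\wt A$ can be an operator iff $C=I_\sM$'' by inspection of the construction, whereas the paper gets existence and minimality for free from the quoted theorem but leaves those final clauses essentially implicit. Second, you close the cycle with a direct (iv)$\Rightarrow$(i) argument via the self-bounding norm estimate $y^2\|M(iy)\|^2\le y\|\IM M(iy)\|\le y\|M(iy)\|$, hence $y\|M(iy)\|\le 1$, feeding back into Proposition \ref{opexpr12}; the paper instead routes (iv)$\Rightarrow$(ii)$\Rightarrow$(i). You also write out the (iii)$\Rightarrow$(iv) kernel identity $K^*R_\lambda(I-KK^*)R_\lambda^* K\ge 0$ that the paper only asserts. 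In short: the paper's proof is shorter because it leans on the boundary-relation machinery already stated as Theorem \ref{readhms}; yours trades that for an explicit, checkable construction and a slightly cleaner logical cycle. Both are valid.
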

\begin{proof}
The equivalence (i)$\Longleftrightarrow$ (iii) follows from Proposition \ref{opexpr12}.

(i)$\Longrightarrow$(iv)

 Since \eqref{drugghtl} holds, we get $C=K^*K$ and the inequality $0\le C\le I_\sM$ implies $||K||\le 1$ and, therefore holds the inequality.
\[
 \frac{\IM M(\lambda)}{\IM \lambda}-M(\lambda)M(\lambda)^*\geq 0,\quad \lambda\in\cmr.
 \]
(iv)$\Longrightarrow$(ii)

Consider $-M(\lambda)^{-1}$. Then
\[
\cfrac{\IM(- M(\lambda)^{-1}h-\lambda h,h)}{\IM\lambda}=\cfrac{\IM(- M(\lambda)^{-1}h,h)}{\IM\lambda}-||h||^2\ge 0, \;h\in\sM.
\]
Hence $\cM(\lambda):=-M(\lambda)^{-1}-\lambda I_\sM$ is a Nevanlinna family.
Due to Theorem \ref{readhms} and \eqref{opexpr1}
we have
\[
-(\cM(\lambda)+\lambda I_\sM)^{-1}=P_\sM(\wt A-\lambda I_\sH)^{-1}\uphar\sM,\;\lambda\in\cmr,
\]
where $\wt A$ is a selfadjoint linear relation in some Hilbert space $\sH=\sM\oplus\cK$.

(ii)$\Longrightarrow$(i)

Let $\wh A_0$ be the operator part of $\wt A$ acting in a subspace $\sH_0$ of $\sH.$ Decompose $\wt A$ as $H={\rm Gr}\wh A_0\oplus\{0,\sH\ominus\sH_0\}$.
Then
\[
P_\sM(\wt A-\lambda I)^{-1}\uphar\sM=P_{\sM}(\wh A_0-\lambda I)^{-1}P_{\sH_0}\uphar\sM
=P_{\sM}P_{\sH_0}(\wh A_0-\lambda I)^{-1}P_{\sH_0}\uphar\sM.
\]
Set $K=P_{\sH_0}\uphar\sM:\sM\to\sH_0$. Then $K^*=P_\sM P_{\sH_0}$, $||K||\le 1,$
\[
M(\lambda)=K^*(\wh A_0-\lambda I)^{-1}K,\;\lambda\in\cmr,
\]
and
\[
s-\lim\limits_{x\to+\infty}iyM(iy)=-K^*K,\; C=K^*K\in [0,I_\sM].
\]
(iii)$\Longrightarrow$(ii)

Since $||K||\le 1$, $\cM(\lambda)=-M^{-1}(\lambda)-\lambda I_\sM$ is a Nevanlinna family. By Theorem \ref{readhms} there is a Hilbert space $\cK$ and a selfadjoint linear relation $\wt A$ in $\sM\oplus\cK$ minimal w.r.t. $\sM$ such that
$\cM(\lambda)=-\left(P_\sM(\wt A-\lambda I)^{-1}\uphar\sM\right)^{-1}-\lambda I_\sM,$ $\lambda\in\cmr.$

\end{proof}

\begin{corollary}
\label{ccorr1} There is a one-to-one correspondence between all
Nevanlinna families $\cM$ in $\sM$ and all
$\bB(\sM)$-valued Nevanlinna functions $M$ satisfying the
condition (ii) in Proposition \ref{opexpr12} with $C\in[0, I_\sM]$. This
correspondence is given by the relations
\[
M(\lambda)=-(\cM(\lambda)+\lambda I_\sM)^{-1},\; \cM(\lambda)=-M(\lambda)^{-1}-\lambda I_\sM,\; \lambda\in\cmr.
\]
\end{corollary}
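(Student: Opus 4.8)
The plan is to realize the two displayed assignments $\Phi\colon\cM(\lambda)\mapsto -(\cM(\lambda)+\lambda I_\sM)^{-1}$ and $\Psi\colon M(\lambda)\mapsto -M(\lambda)^{-1}-\lambda I_\sM$ as mutually inverse bijections between the class $\wt R(\sM)$ of Nevanlinna families and the class of $\bB(\sM)$-valued Nevanlinna functions $M$ satisfying condition (ii) of Proposition \ref{opexpr12b} with $0\le C\le I_\sM$. The two nontrivial well-definedness statements are already contained in Theorem \ref{readhms} and Proposition \ref{opexpr12b}; what remains is to check that $\Phi$ and $\Psi$ compose to the identity.

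First I would verify that $\Phi$ maps into the target class. Given $\cM\in\wt R(\sM)$, Theorem \ref{readhms} provides a selfadjoint relation $\wt A$ in some $\sM\oplus\cK$, minimal with respect to $\sM$, with $M(\lambda):=-(\cM(\lambda)+\lambda I_\sM)^{-1}=P_\sM(\wt A-\lambda I)^{-1}\uphar\sM$. Thus $M$ is a $\bB(\sM)$-valued Nevanlinna function satisfying condition (ii) of Proposition \ref{opexpr12b}, and by the equivalence (ii)$\Longleftrightarrow$(i) proved there its limit $C=-s-\lim\limits_{y\to+\infty}iyM(iy)$ satisfies $0\le C\le I_\sM$. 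Conversely, if $M$ lies in the target class, then condition (ii) holds, and the computation carried out in the proof of Proposition \ref{opexpr12b} (the step verifying that $-M(\lambda)^{-1}-\lambda I_\sM$ is maximal dissipative in $\dC_+$, accumulative in $\dC_-$, with holomorphic resolvent) shows that $\Psi(M)=\cM$ is again a Nevanlinna family. Hence both $\Phi$ and $\Psi$ are well defined between the two classes.

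It then remains to check $\Psi\circ\Phi=\mathrm{id}$ and $\Phi\circ\Psi=\mathrm{id}$. I would carry this out at the level of linear relations, since $\cM(\lambda)$ may have a nontrivial multivalued part and correspondingly $M(\lambda)=-(\cM(\lambda)+\lambda I_\sM)^{-1}$ a nontrivial kernel equal to $\mul\cM(\lambda)$. The only facts needed are the elementary relational identities $(X^{-1})^{-1}=X$, $(-X)^{-1}=-(X^{-1})$, $-(-X)=X$, and $(X+\lambda I_\sM)-\lambda I_\sM=X$, all of which follow directly from the definitions, using that $X$ is a linear subspace, so that $(x,y)\in X$ implies $(-x,-y)\in X$. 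Starting from $M=-(\cM+\lambda I_\sM)^{-1}$ and inverting gives $M^{-1}=-(\cM+\lambda I_\sM)$, whence $-M^{-1}-\lambda I_\sM=(\cM+\lambda I_\sM)-\lambda I_\sM=\cM$, i.e.\ $\Psi(\Phi(\cM))=\cM$; reading the same chain of identities backwards yields $\Phi(\Psi(M))=M$.

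The main obstacle I anticipate is not analytic but notational: one must be careful to interpret $M(\lambda)^{-1}$, the scalar multiple by $-1$, and the shift by $\lambda I_\sM$ as operations on linear relations rather than on operators, so that the four identities above remain valid precisely when $(\cM(\lambda)+\lambda I_\sM)^{-1}$ fails to be injective. Once this relational bookkeeping is fixed, the correspondence is immediate, with all the substantive content supplied by Theorem \ref{readhms} (existence for the direction $\cM\mapsto M$) and Proposition \ref{opexpr12b} (the bound $0\le C\le I_\sM$ together with the family property for $M\mapsto\cM$).
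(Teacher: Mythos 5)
Your proposal is correct and follows essentially the route the paper intends for this corollary, which is stated without an explicit proof as an immediate consequence of Theorem \ref{readhms} (well-definedness of $\cM\mapsto M$) and Proposition \ref{opexpr12b} (the bound $0\le C\le I_\sM$ and the reverse direction). Your additional relational bookkeeping verifying that the two assignments are mutually inverse, with $\mul\cM(\lambda)=\ker M(\lambda)$ handled via the linear-relation identities, simply makes explicit what the paper leaves implicit.
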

\begin{remark} \label{ComDil}
 For the case $\sM=\dC$ the statement of Corollary \ref{ccorr1} can be found in \cite[Chapter VII, $\S$1, Lemma 1.7]{Ber}.

In \cite{DLS} (see also \cite{ADW}) it is established that an $\bB(\sM)$-valued function $M(\lambda)$, $\lambda\in\cD\subset\dC_+/\dC_-$ admits the representation
\eqref{drugght2} iff the kernel
\[
K(\lambda,\mu)=\cfrac{M(\lambda)-M(\mu)^*}{\lambda-\bar \mu}-M(\mu)^*M(\lambda)
\]
is nonnegative on $\cD.$
\end{remark}

\subsection{The subclass ${\bf N}^{0}_\sM [-1,1]$}
Notice, that if $M\in{\bf N}^{0}_\sM [-1,1]$, then
\[
\left\{\begin{array}{l} (M(x)g,g)>0 \;\forall g\in\sM\setminus\{0\},\; x<-1,\\
(M(x)g,g)<0 \;\forall g\in\sM\setminus\{0\},\; x>1 \end{array}\right..
\]
Therefore, see \cite[Appendix]{KrNu}
$$(1+\lambda)M(\lambda),\; (1-\lambda)M(\lambda)\in\cR[\sM].$$
\begin{theorem} \label{ghtj}
1) A $\bB(\sM)$-valued Nevanlinna function $M$ belongs to ${\bf N}_{\sM}^0[-1,1]$ if and only if the function
\[
\fL(\lambda,\xi)=\frac{(1-\lambda^2)M(\lambda)-(1-\bar\xi^2)M(\xi)^*
-(\lambda-\bar\xi) I_\sM}{\lambda-\bar\xi},
\]
with $\lambda,\xi\in\dC\setminus[-1,1],$ $\lambda\ne\bar{\xi}$ is a nonnegative kernel.

2) If $M\in {\bf N}_{\sM}^0[-1,1]$, then the function
\[
\cfrac{M(\lambda)^{-1}}{\lambda^2-1},\;\lambda\in \dC\setminus[-1,1]
\]
belongs to ${\bf N}_{\sM}^0[-1,1]$ as well.

3) If a selfadjoint contraction $T$ in the Hilbert space $\sH$, containing $\sM$ as a subspace, realizes $M$, i.e.,
 $M(\lambda)=P_\sM(T-\lambda I)^{-1}\uphar\sM$, for all $\lambda\in\dC\setminus[-1,1]$, then
\[
\cfrac{M(\lambda)^{-1}}{\lambda^2-1}=P_\sM({\bf T}-\lambda I)^{-1}\uphar\sM,\; \lambda\in\dC\setminus [-1,1],
\]
where a selfadjoint contraction ${\bf T}$ is given by
\begin{equation}
\label{wollen}
{\bf T}:=\begin{bmatrix}-P_\sM T\uphar\sM&P_\sM D_T\cr D_T\uphar\sM &T \end{bmatrix}:\begin{array}{l}\sM\\\oplus\\\sD_T\end{array}\to\begin{array}{l}\sM\\\oplus\\\sD_T\end{array},
\end{equation}
and $D_T:=(I-T^2)^{1/2}$, $\sD_T:=\cran D_T$.
Moreover,
if $T$ is $\sM$-simple, then ${\bf T}$ is $\sM$-simple as well and
the operator ${\bf T}\uphar\sD_{\bf T}$ is unitarily equivalent to the operator $P_{{\sM}^\perp} T\uphar{\sM}^\perp.$
\end{theorem}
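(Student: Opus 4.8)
The plan is to reduce part 1) to a standard Nevanlinna-kernel statement, to obtain parts 2) and 3) simultaneously from one explicit operator model, and to read off the two ``moreover'' assertions from that model.

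For 1), I first observe that, since $M(\xi)^*=M(\bar\xi)$, the numerator of $\fL$ regroups as $G(\lambda)-G(\xi)^*$, where $G(\lambda):=(1-\lambda^2)M(\lambda)-\lambda I_\sM$; thus $\fL(\lambda,\xi)=\bigl(G(\lambda)-G(\xi)^*\bigr)/(\lambda-\bar\xi)$ is exactly the defect kernel of $G$, and ``$\fL$ is a nonnegative kernel'' means precisely that $G$ is a Nevanlinna function holomorphic on $\dC\setminus[-1,1]$. For the direct implication I realize $M(\lambda)=P_\sM(T-\lambda I)^{-1}\uphar\sM$ with a selfadjoint contraction $T$ on $\sH\supseteq\sM$ (Theorem~\ref{BROD}); using the scalar identity $\frac{1-\lambda^2}{t-\lambda}=(t+\lambda)+\frac{1-t^2}{t-\lambda}$ together with $D_T^2=I-T^2$ gives $G(\lambda)=(D_T\uphar\sM)^*(T-\lambda I)^{-1}(D_T\uphar\sM)+P_\sM T\uphar\sM$, so by the resolvent identity $\fL(\lambda,\xi)=(D_T\uphar\sM)^*(T-\lambda I)^{-1}(T-\bar\xi I)^{-1}(D_T\uphar\sM)$, a manifestly nonnegative kernel. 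For the converse, $\fL\ge0$ makes $G$ Nevanlinna with spectral measure supported in $[-1,1]$; writing $M=(G+\lambda I_\sM)/(1-\lambda^2)$ and inspecting the $y\to+\infty$ asymptotics of $G(iy)=(1+y^2)M(iy)-iyI_\sM$ returns the defining properties of ${\bf N}^0_\sM[-1,1]$ (absence of a linear term in $M$, $\lim M(iy)=0$, and $s\text{-}\lim iy\,M(iy)=-I_\sM$). The delicate point here is extracting the exact normalization $-I_\sM$, i.e. that $G$ carries no linear term at infinity; this must be argued from the growth bound that $G$ being Nevanlinna imposes on $(1+y^2)\IM M(iy)-y$.

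For 3) I recognize the block matrix ${\bf T}$ as a compression of a symmetry. Consider $W:=\begin{bmatrix}-T&D_T\\ D_T&T\end{bmatrix}$ on $\sH\oplus\sH$; since $T$ commutes with $D_T$ and $T^2+D_T^2=I$ one checks $W^*=W$ and $W^2=I$, so $W$ is a selfadjoint unitary. Because $\sD_T=\cran D_T$ reduces $T$ and $\ran D_T\perp\ker D_T$, the compression $P_{\sM\oplus\sD_T}W\uphar(\sM\oplus\sD_T)$ has exactly the four blocks of ${\bf T}$, the $(2,1)$ and $(1,2)$ entries being $D_T\uphar\sM$ and its adjoint $P_\sM D_T\uphar\sD_T$. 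A compression of a selfadjoint unitary is automatically a selfadjoint contraction, so ${\bf T}^*={\bf T}$ and $\|{\bf T}\|\le1$ follow at once. For the compressed resolvent I apply the Schur--Frobenius formula \eqref{Sh-Fr2} with $D=-P_\sM T\uphar\sM$, $K=D_T\uphar\sM$ and lower-right block $T\uphar\sD_T$; the computation rests on $D_T(T-\lambda I)^{-1}D_T=(I-T^2)(T-\lambda I)^{-1}=-(T+\lambda I)-(\lambda^2-1)(T-\lambda I)^{-1}$, which after projecting onto $\sM$ collapses the Schur complement to $-(\lambda^2-1)M(\lambda)$, whence $P_\sM({\bf T}-\lambda I)^{-1}\uphar\sM=(\lambda^2-1)^{-1}M(\lambda)^{-1}$. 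Part 2) is then immediate: ${\bf T}$ is a selfadjoint contraction on a space containing $\sM$ that realizes $(\lambda^2-1)^{-1}M(\lambda)^{-1}$, so by the converse half of Theorem~\ref{BROD} this function lies in ${\bf N}^0_\sM[-1,1]$, the normalization $s\text{-}\lim iy\,P_\sM({\bf T}-iyI)^{-1}\uphar\sM=-I_\sM$ being automatic for a selfadjoint contraction.

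Finally the two ``moreover'' statements. For $\sM$-simplicity of ${\bf T}$: applying ${\bf T}$ to $(g,0)$ gives $(-P_\sM Tg,\,D_Tg)$, and subtracting its $\sM$-component shows $(0,D_Tg)\in\cspan\{{\bf T}^n\sM\}$; iterating produces $(0,T^nD_Tg)$, and since $T$ is $\sM$-simple and commutes with $D_T$, $\cspan\{T^nD_T\sM\}=\cran D_T=\sD_T$, so $\cspan\{{\bf T}^n\sM\}=\sM\oplus\sD_T$. For the unitary equivalence put $\cE:=\sM\oplus\sD_T$ and $B:=P_{\cE^\perp}W\uphar\cE$; since $W$ is isometric, $\|B\ell\|^2=\|\ell\|^2-\|{\bf T}\ell\|^2=\|D_{\bf T}\ell\|^2$, so its polar decomposition is $B=V D_{\bf T}$ with $V$ a partial isometry of $\sD_{\bf T}$ onto $\cran B$. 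A direct computation using $\cE^\perp=\sM^\perp\oplus\ker D_T$ shows $\cran B\subseteq\sM^\perp$ and the intertwining $B{\bf T}=T_{22}B$ with $T_{22}:=P_{\sM^\perp}T\uphar\sM^\perp$, so $V$ intertwines ${\bf T}\uphar\sD_{\bf T}$ with $T_{22}\uphar\cran B$. I expect the main obstacle to be the surjectivity $\cran B=\sM^\perp$: I would prove it by showing that any $u\in\sM^\perp\ominus\cran B$ satisfies $D_Tu=0$ and $P_\sM Tu=0$, whence $T^2u=u$ and $P_\sM T^nu\in\{P_\sM u,P_\sM Tu\}=\{0\}$ for every $n$, so $\sM$-simplicity of $T$ forces $u=0$. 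Then $V\colon\sD_{\bf T}\to\sM^\perp$ is unitary and ${\bf T}\uphar\sD_{\bf T}$ is unitarily equivalent to $P_{\sM^\perp}T\uphar\sM^\perp$.
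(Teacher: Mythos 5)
Your handling of parts 2), 3) and the two ``moreover'' claims is correct, and in two places it takes a genuinely different route from the paper. You obtain selfadjointness and contractivity of ${\bf T}$ by exhibiting it as the compression to $\sM\oplus\sD_T$ of the symmetry $W=\left[\begin{smallmatrix}-T&D_T\\ D_T&T\end{smallmatrix}\right]$ on $\sH\oplus\sH$, whereas the paper verifies the quadratic-form identity $\bigl\|(I\mp T)^{1/2}\varphi\pm(I\pm T)^{1/2}f\bigr\|^2=\|\cdot\|^2\pm(\cdot,{\bf T}\cdot)$; both work, and yours has the bonus of producing the defect operator $B=P_{\cE^\perp}W\uphar\cE$ used later. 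The Schur--Frobenius computation of the compressed resolvent is the same as the paper's. For the unitary equivalence of ${\bf T}\uphar\sD_{\bf T}$ with $P_{\sM^\perp}T\uphar\sM^\perp$ the paper applies the transformation a second time, checks that the resulting operator again realizes $M$, and invokes uniqueness of $\sM$-simple realizations; your argument via $B=VD_{\bf T}$, the intertwining, and surjectivity of $\cran B$ onto $\sM^\perp$ is self-contained and equally valid. Two small points: (a) $W^2=I$ gives $B{\bf T}=-{\bf T}'B$ with ${\bf T}'=P_{\cE^\perp}W\uphar\cE^\perp$, and the intertwining $B{\bf T}=T_{22}B$ holds only because ${\bf T}'$ acts as $-P_{\sM^\perp}T$ on $\sM^\perp\oplus\{0\}\supseteq\cran B$, so the two signs cancel --- make this explicit; (b) before writing $M(\lambda)^{-1}$ anywhere you must prove $M(\lambda)$ is boundedly invertible for all $\lambda\in\dC\setminus[-1,1]$; the paper does this from $\left|\left((T-\lambda I)h,h\right)\right|\ge {\rm dist}(\lambda,[-1,1])\,\|h\|^2$ applied to $h=(T-\lambda I)^{-1}f$, and you should add that line.

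Part 1) is where the genuine gap lies. The forward implication (your factorization $\fL(\lambda,\xi)=P_\sM(T-\lambda I)^{-1}(I-T^2)(T-\bar\xi I)^{-1}\uphar\sM$) is exactly the paper's observation and is fine. For the converse the paper offers no proof --- it cites \cite[Theorem 6.1]{AHS2} --- while you attempt one and correctly identify the crux: nonnegativity of $\fL$ makes $G(\lambda)=(1-\lambda^2)M(\lambda)-\lambda I_\sM$ a Nevanlinna function holomorphic off $[-1,1]$, and you must exclude a linear term $\beta\lambda$, $\beta\ge0$, in $G$ to recover $s\text{-}\lim iy\,M(iy)=-I_\sM$. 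The growth bound you propose cannot do this: for the scalar Nevanlinna function $M(\lambda)=2\lambda/(1-\lambda^2)$ one has $G(\lambda)=\lambda$ and $\fL\equiv 1\ge 0$, yet $iy\,M(iy)\to -2$. So the kernel condition alone, by your route, does not yield the exact normalization; you must either invoke the cited theorem (checking its precise hypotheses) or supply an additional normalization input. As written, the converse half of 1) is not proved.
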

\begin{proof}  The statement in 1) follows from \cite[Theorem 6.1]{AHS2}. Observe that if $M(\lambda)=P_\sM(T-\lambda I)^{-1}\uphar\sM$ $\forall \lambda\in\dC\setminus[-1,1]$,  where $T$ is a selfadjoint contraction, then
\begin{multline}\label{zlhjl}
\fL(\lambda,\xi)=\frac{(1-\lambda^2)M(\lambda)-(1-\bar\xi^2)M(\xi)^*
-(\lambda-\bar\xi) I_\sM}{\lambda-\bar\xi}\\
=P_\sM (T-\lambda I)^{-1}(I-T^2)(T-\overline{\xi}I)^{-1}\uphar\sM,\; \lambda,\xi\in\dC\setminus[-1,1],\;\lambda\ne\bar{\xi}.
\end{multline}

2) Let $\lambda\in\dC\setminus [-1,1]$, then
\[
\left|\left((T-\lambda I) h, h\right)\right|\ge d(\lambda)||h||^2  \;\forall h\in\sH,
\]
where $d(\lambda)={\rm {dist}}(\lambda,[-1,1])$.
Set $h=(T-\lambda I)^{-1}f$, $f \in\sM$. Then
\begin{multline*}
||M(\lambda)f||||f||\ge\left|(f,M(\lambda)f)\right|=\left|\left(f,(T-\lambda I)^{-1}f\right)\right|\\
=\left|\left(h,(T-\lambda I) h\right)\right|
\ge d(\lambda)||h||^2\ge c(\lambda)||f||^2,\; c(\lambda)>0.
\end{multline*}
Hence, $||M(\lambda)f||\ge c(\lambda)||f||$ and since $M(\bar\lambda)=M(\lambda)^*$, we get  $||M(\lambda)^*f||\ge c(\bar\lambda)||f||.$
It follows that $M(\lambda)^{-1}\in\bB(\sM)$ for all $\lambda\in\dC\setminus [-1,1]$.

 Set
\[
L(\lambda):=(1-\lambda^2)M(\lambda)-\lambda I_\sM, \;\lambda\in\dC\setminus [-1,1].
\]
Then from \eqref{zlhjl} we get
\begin{multline*}
L(\lambda)-L(\lambda)^*=(1-\lambda^2)M(\lambda)-(1-\bar\lambda^2)(M(\lambda)^*-(\lambda-\bar\lambda)I_\sM\\
=(\lambda-\bar\lambda)P_\sM (T-\lambda I)^{-1}(I-T^2)(T-\bar{\lambda}I)^{-1}\uphar\sM.
\end{multline*}
It follows that $L(\lambda)$ and the functions
\[
(1-\lambda^2)M(\lambda)=L(\lambda)+\lambda I_\sM,\;\lambda\in\dC\setminus[-1,1]
\]
and
\[
-\left((1-\lambda^2)M(\lambda)\right)^{-1}=\cfrac{M(\lambda)^{-1}}{\lambda^2-1},\; \lambda\in\dC\setminus[-1,1]
\]
are Nevanlinna functions. Then from the equality $M(\lambda)=-\lambda^{-1}+o(\lambda^{-1}),$ $\lambda\to\infty$, we get that also
 \[
 \cfrac{M(\lambda)^{-1}}{\lambda^2-1}= -\lambda^{-1}+o(\lambda^{-1}),\;\lambda\to\infty,
 \]
i.e.,
 \[
\cfrac{M(\lambda)^{-1}}{\lambda^2-1}\in{\bf N}^0_\sM[-1,1].
\]
3) Observe that the subspace $\sD_T$ is contained in the Hilbert space $\sH.$
Let ${\bf H}:=\sM\oplus\sD_T$ and let
${\bf T}$ be given by \eqref{wollen}. Since $T$ is a selfadjoint contraction in $\sH$, we get for an arbitrary $\varphi\in \sM$ and $f\in\sD_T$ the equalities
\[
\left(\begin{bmatrix}\varphi\cr f\end{bmatrix}, \begin{bmatrix}\varphi\cr f\end{bmatrix}\right)\pm
\left(\begin{bmatrix}\varphi\cr f\end{bmatrix},{\bf  T}\begin{bmatrix}\varphi\cr f\end{bmatrix}\right)=
\left\|(I\mp T)^{1/2}\f\pm (I\pm T)^{1/2}f \right\|^2.
\]
Therefore ${\bf T}$ is a selfadjoint contraction in the Hilbert space ${\bf H}.$

Applying \eqref{Sh-Fr2} we obtain
\begin{multline*}
P_\sM({\bf T}-\lambda I)^{-1}\uphar\sM=-\left(\lambda I+P_\sM T\uphar\sM+P_\sM D_T (T-\lambda I)^{-1}D_T\uphar\sM\right)^{-1}\\
=-\left(\lambda I+P_\sM\left(T(T-\lambda I)+I-T^2\right)(T-\lambda I)^{-1}\uphar\sM\right)^{-1}\\
=-\left(\lambda I+P_\sM(I-\lambda T)(T-\lambda I)^{-1}\uphar\sM\right)^{-1}=-\left((1-\lambda^2)P_\sM(T-\lambda I)^{-1}\uphar\sM\right)^{-1}\\
=\cfrac{M^{-1}(\lambda)}{\lambda^2-1},\; \lambda\in \dC\setminus [-1,1].
\end{multline*}
Suppose that $T$ is $\sM$-simple, i.e.,
$$\cspan\{T^n\sM,\;n\in\dN_0\}=\sM\oplus\cK\Longleftrightarrow\bigcap\limits_{n=0}^\infty\ker(P_\sM T^n)=\{0\}. $$
Hence, since
\[
\sD_T\ominus\{\cspan\{T^nD_T\sM,\;n\in\dN_0\}\}=\bigcap\limits_{n=0}^\infty\ker(P_\sM T^nD_T),
\]
we get $\cspan\{T^nD_T\sM,\;n\in\dN_0\}=\sD_T.$ This means that the operator ${\bf T}$ is $\sM$-simple.

Let
\[
\underline{}\dT=\begin{bmatrix}-P_\sM {\bf T}\uphar\sM&P_\sM D_{\bf T}\uphar\sD_{\bf T}\cr D_{\bf T}\uphar\sM &{\bf T}\uphar\sD_{\bf T} \end{bmatrix}=\begin{bmatrix}P_\sM {T}\uphar\sM&P_\sM D_{\bf T}\cr D_{\bf T}\uphar\sM &{\bf T}\uphar\sD_{\bf T}\end{bmatrix} :\begin{array}{l}\sM\\\oplus\\\sD_{\bf T}\end{array}\to\begin{array}{l}\sM\\\oplus\\\sD_{\bf T}\end{array}.
\]
As has been proved above because the
selfadjoint contraction ${\bf T}$ realizes the function $Q(\lambda):=(\lambda^2-1)^{-1}M(\lambda)^{-1}$, i.e.,
 \[
 P_\sM({\bf T}-\lambda I)^{-1}\uphar\sM=Q(\lambda)=\cfrac{M(\lambda)^{-1}}{\lambda^2-1},\; \lambda\in\dC\setminus [-1,1],
 \]
the selfadjoint contraction $\dT$ realizes the function $(\lambda^2-1)^{-1}Q(\lambda)^{-1}=M(\lambda)$. In addition, if $T$ is $\sM$-simple, then ${\bf T}$ and therefore ${\dT}$ are $\sM$-simple.
Since
\[
P_\sM({\dT}-\lambda I)^{-1}\uphar\sM=P_\sM({T}-\lambda I)^{-1}\uphar\sM=M(\lambda),\;|\lambda|>1,
\]
the operators $\dT$ and $T$ are unitarily equivalent and, moreover, see Theorem \ref{BROD}, there exists a unitary operator ${\dU}$ of the form
 \[
 \dU=\begin{bmatrix} I_\sM&0\cr 0& U\end{bmatrix}:\begin{array}{l}\sM\\\oplus\\\sD_{{\bf T}}\end{array}\to \begin{array}{l}\sM\\\oplus\\\cK\end{array},
 \]
 where $\cK:=\sH\ominus\sM$ and $U$ is a unitary operator from $\sD_{T}$ onto $\cK$ such that
\begin{multline*}
T\dU=\dU\dT\Longleftrightarrow\begin{bmatrix}P_\sM {T}\uphar\sM&P_\sM {T}\uphar\cK\cr P_\cK{T}\uphar\sM &P_\cK{T}\uphar\cK \end{bmatrix}\begin{bmatrix} I_\sM&0\cr 0& U\end{bmatrix}=\begin{bmatrix} I_\sM&0\cr 0& U\end{bmatrix}\begin{bmatrix}P_\sM {T}\uphar\sM&P_\sM D_{\bf T}\uphar\sD_{\bf T}\cr D_{\bf T}\uphar\sM &{\bf T} \end{bmatrix}\\
 \Longleftrightarrow \left\{\begin{array}{l}\left(P_\sM T\uphar\cK\right)U=P_\sM D_{\bf T}\uphar\sD_{\bf T}\\
P_\cK T\uphar\sM=UD_{\bf T}\uphar\sM\\
\left(P_\cK T\uphar\cK\right)U=U{\bf T}\uphar\sD_{\bf T}\uphar\sD_{\bf T}\end{array}\right..
\end{multline*}
In particular $P_\cK T\uphar\cK$ and ${\bf T}\uphar\sD_{\bf T}$ are unitarily equivalent.
\end{proof}
Observe that for a bounded selfadjoint $T$ the equality $M(\lambda)=P_\sM(T-\lambda I)^{-1}\uphar\sM$  yields the following relation for $\lambda\in\cmr$:
\[
\cfrac{1-|\lambda|^2}{\IM\lambda}\,\IM M(\lambda)-2\RE\left(\lambda M(\lambda)\right)-I_\sM=P_\sM (T-\lambda I)^{-1}(I-T^2)(T-\bar{\lambda}I)^{-1}\uphar\sM.
\]
Hence for $M(\lambda)\in {\bf N}^{0}_\sM [-1,1]$ we get
\[
\cfrac{1-|\lambda|^2}{\IM\lambda}\,\IM M(\lambda)-2\RE\left(\lambda M(\lambda)\right)-I_\sM =\cfrac{\IM\left((1-\lambda^2)M(\lambda)-\lambda\right)}{\IM\lambda}\ge 0,\; \IM\lambda\ne 0.
\]
\subsection{The fixed point of the mapping ${\bf \Gamma}$}

\begin{proposition}
\label{fixpoint} Let $\sM$ be a Hilbert space. Then
the
mapping ${\bf \Gamma}$ \eqref{GTR1} has a unique fixed point
\begin{equation}\label{fixfunczw}
M_0(\lambda)=-\cfrac{I_\sM}{\sqrt{\lambda^2-1}}\quad (\IM\sqrt{\lambda^2-1}> 0\quad\mbox{for}\quad\IM\lambda>0).
\end{equation}
Define the weight $\rho_0(t)$ and the weighted Hilbert space $\sH_0$ as follows
\begin{equation}\label{vesighj} \begin{array}{l}\rho_0(t)=\cfrac{1}{\pi}\cfrac{1}{\sqrt{1-t^2}},\; t\in (-1,1),\\
\sH_0:=L_2([-1,1],\sM,\rho_0(t))=L_2\left([-1,1],\;\rho_0(t)\right)\bigotimes\sM=\left\{f(t):\int\limits_{-1}^1\cfrac{||f(t)||^2_\sM}{\sqrt{1-t^2}}\, dt<\infty\right\}.
\end{array}
\end{equation}
Then $\sH_0$ is the Hilbert space with the inner product
$$\left(f(t),g(t)\right)_{\sH_0}=\cfrac{1}{\pi}\int\limits_{-1}^1(f(t),g(t))_\sM\,\rho_0(t)\,dt=\cfrac{1}{\pi}
\int\limits_{-1}^1\cfrac{(f(t),g(t))_\sM}{\sqrt{1-t^2}}\, dt.$$
Identify $\sM$ with a subspace of $\sH_0$ of constant vector-functions $\{f(t)\equiv f,\;f\in\sM\}$.
Define in $\sH_0$ the multiplication operator
\begin{equation}\label{ogthevy}
(T_0f)(t)=tf(t),\; f\in\sH_0.
\end{equation}
Then
\[
M_0(\lambda)=P_{\sM}(T_0-\lambda I)^{-1}\uphar\sM.
\]
 Let ${\bf H}_0=\bigoplus\limits_{j=0}^\infty\sM=\ell^2(\dN_0)\bigotimes\sM$ and let ${\bf J_0}$ be the operator in ${\bf H}_0$ given by the block-operator Jacobi matrix of the form \eqref{chebb1}.
 Set $\sM_0:=\sM\bigoplus\{0\}\bigoplus\{0\}\bigoplus\cdots$.
Then
\[
M_0(\lambda)=P_{\sM_0}({\bf J}_0-\lambda I)^{-1}\uphar{\sM}_0.
\]
\end{proposition}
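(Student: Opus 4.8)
The plan is to establish the three assertions in turn: the fixed-point claim, the realization of $M_0$ by the multiplication operator $T_0$, and the unitary equivalence of $T_0$ with the block Jacobi matrix ${\bf J}_0$.

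First I would analyze the fixed-point equation. If $M\in{\bf N}_\sM^0[-1,1]$ satisfies ${\bf\Gamma}(M)=M$, then by \eqref{GTR1} we have $(\lambda^2-1)M(\lambda)=M(\lambda)^{-1}$, and multiplying by $M(\lambda)$ gives $M(\lambda)^2=(\lambda^2-1)^{-1}I_\sM$. For real $x>1$ the value $M(x)$ is selfadjoint (by $M(x)^*=M(\bar x)=M(x)$) and strictly negative on ${\bf N}^0_\sM[-1,1]$ (as recorded just before Theorem \ref{ghtj}), while its square is the positive scalar $(x^2-1)^{-1}I_\sM$; the spectral theorem then forces the negative root $M(x)=-(x^2-1)^{-1/2}I_\sM$. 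Since $\dC\setminus[-1,1]$ is connected and $M$ is holomorphic there, the identity theorem propagates this to all of $\dC\setminus[-1,1]$, giving $M=M_0$ and hence uniqueness. Conversely $M_0(\lambda)^{-1}=-\sqrt{\lambda^2-1}\,I_\sM$, so $(\lambda^2-1)^{-1}M_0(\lambda)^{-1}=M_0(\lambda)$, i.e. ${\bf\Gamma}(M_0)=M_0$; that $M_0\in{\bf N}^0_\sM[-1,1]$ follows either from the asymptotics $iyM_0(iy)\to-I_\sM$ (using the branch $\IM\sqrt{\lambda^2-1}>0$ for $\IM\lambda>0$) together with holomorphy off $[-1,1]$, or from the $T_0$-realization below via Theorem \ref{BROD}.

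Next, for the realization by $T_0$, I would identify $\sM$ with the constant $\sM$-valued functions, an isometric embedding because $\tfrac1\pi\int_{-1}^1(1-t^2)^{-1/2}\,dt=1$. For $f\in\sM$ one has $((T_0-\lambda I)^{-1}f)(t)=(t-\lambda)^{-1}f$, and the orthogonal projection $P_\sM$ onto the constants sends $g$ to the constant $\tfrac1\pi\int_{-1}^1(1-t^2)^{-1/2}g(t)\,dt$. Hence $P_\sM(T_0-\lambda I)^{-1}\uphar\sM$ is multiplication by the scalar $\tfrac1\pi\int_{-1}^1\big((t-\lambda)\sqrt{1-t^2}\big)^{-1}\,dt$, and the substitution $t=\cos\theta$ reduces this to the standard integral $\tfrac1\pi\int_0^\pi(\cos\theta-\lambda)^{-1}\,d\theta=-(\lambda^2-1)^{-1/2}$, with the same branch as above, yielding $P_\sM(T_0-\lambda I)^{-1}\uphar\sM=M_0(\lambda)$.

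Finally, for the Jacobi realization I would pass to the orthonormal basis of $\sH_0$ given by the normalized Chebyshev polynomials of the first kind: $e_0\equiv1$ and $e_n(t)=\sqrt2\,\cos(n\arccos t)$ for $n\ge1$, tensored with an orthonormal basis of $\sM$; orthonormality is immediate from $\tfrac1\pi\int_0^\pi\cos m\theta\cos n\theta\,d\theta$. Rewriting the three-term recurrence $C_{n+1}=2tC_n-C_{n-1}$ in terms of the $e_n$ gives $t\,e_0=\tfrac1{\sqrt2}e_1$, $t\,e_1=\tfrac1{\sqrt2}e_0+\tfrac12e_2$, and $t\,e_n=\tfrac12e_{n-1}+\tfrac12e_{n+1}$ for $n\ge2$, which is exactly the action of the matrix \eqref{chebb1}. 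Thus $\Phi:\sH_0\to{\bf H}_0$, $e_n\otimes h\mapsto\delta_n\otimes h$, is unitary with $\Phi T_0\Phi^*={\bf J}_0$ and $\Phi$ mapping $\sM$ isometrically onto $\sM_0$; compressing the resolvent then gives $P_{\sM_0}({\bf J}_0-\lambda I)^{-1}\uphar\sM_0=P_\sM(T_0-\lambda I)^{-1}\uphar\sM=M_0(\lambda)$. The main obstacle is consistent bookkeeping of the branch of $\sqrt{\lambda^2-1}$ across the fixed-point equation, the Cauchy-type integral, and the normalization $iyM_0(iy)\to-I_\sM$, since a wrong branch flips the sign and breaks membership in ${\bf N}^0_\sM[-1,1]$; matching the $e_n$-recurrence with the entries of \eqref{chebb1} and evaluating the scalar integral are otherwise routine.
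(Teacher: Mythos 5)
Your proposal is correct and follows essentially the same route as the paper: derive $M(\lambda)^2=(\lambda^2-1)^{-1}I_\sM$ and use the Nevanlinna/sign properties to select the negative branch, realize $M_0$ via the Cauchy-type integral $\frac1\pi\int_{-1}^1\frac{dt}{(t-\lambda)\sqrt{1-t^2}}=-(\lambda^2-1)^{-1/2}$, and pass to the Chebyshev basis of the first kind to obtain ${\bf J}_0=J_0\otimes I_\sM$. Your spectral-theorem plus identity-theorem justification of uniqueness merely makes explicit what the paper compresses into ``since $M_0$ is a Nevanlinna function.''
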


\begin{proof}
Let $M_0(\lambda)$ be a fixed point of the mapping ${\bf \Gamma}$ , i.e.,
\[
M_0(\lambda)=\cfrac{M_0(\lambda)^{-1}}{\lambda^2-1}\Longleftrightarrow M_0(\lambda)^2=\cfrac{1}{\lambda^2-1}\,I_\sM,\;\lambda\in\dC\setminus[-1,1]
\]
Since $M_0(\lambda)$ is Nevanlinna function, we get \eqref{fixfunczw}.

For each $h\in\sM$ calculations give the equality, see \cite[pages 545--546]{Ber}, \cite{CReml},
\[
-\cfrac{h}{\sqrt{\lambda^2-1}}=\cfrac{1}{\pi}\int\limits_{-1}^1\cfrac{h}{t-\lambda}\,\cfrac{1}{\sqrt{1-t^2}}\,dt,\;\lambda\in\dC\setminus[-1,1].
\]
Therefore, if $T_0$ is the operator of the form \eqref{ogthevy}, then
\[
M_0(\lambda)=P_{\sM}(T_0-\lambda I)^{-1}\uphar\sM,\;\lambda\in\dC\setminus[-1,1].
\]
As it is well known the Chebyshev polynomials of the first kind
\[
\wh T_0(t)=1,\; \wh T_n(t):=\sqrt{2}\cos(n\arccos t),\;n\ge 1
\]
form an orthonormal basis of the space $L_2([-1,1],\rho_0(t)),$ where $\rho_0(t)$ is given by \eqref{vesighj}. This polynomials satisfy the recurrence
relations
\begin{multline*}
t\wh T_0(t)=\cfrac{1}{\sqrt{2}}\wh T_1(t),\;t\wh T_1(t)=\cfrac{1}{\sqrt{2}}\wh T_0(t)+\cfrac{1}{2}\wh T_2(t),\\
t\wh T_n(t)=\cfrac{1}{2}\wh T_{n-1}(t)+\cfrac{1}{2}\wh T_{n+1}(t),\;n\ge 2.
\end{multline*}
Hence the matrix of the operator $\sT_0$ of multiplication on the independent variable in the Hilbert space $L_2(\rho(t),[-1,1])$ w.r.t. the basis $\{\wh T_n(t)\}_{n=0}^\infty$ (the Jacobi matrix) takes the form \eqref{chebb1} when $\sM=\dC.$
Besides $m_0(\lambda):=(({{\bf J}_0}-\lambda I)^{-1}\delta_0,\delta_0)=-\cfrac{1}{\sqrt{\lambda^2-1}},$ where
$ \delta_0=\begin{bmatrix}1&0&0&\cdots\end{bmatrix}^T$ \cite{Ber}.
Since $T_0=\sT_0\bigotimes I_\sM$ we get that $T_0$ is unitarily equivalent to ${\bf J_0}=J_0\bigotimes I_\sM$ and
$M_0(\lambda)=P_{\sM_0}({\bf J}_0-\lambda I)^{-1}\uphar\sM_0$.
\end{proof}

Observe that $\sM$-valued holomorphic in $\dC\setminus [-1,1]$ function
 \[
M_1(\lambda):=2(-\lambda I_\sM-M^{-1}_0(\lambda))=2(-\lambda +{\sqrt{\lambda^2-1}})I_\sM
\]
belongs to the class ${\bf N}_{\sM}^0[-1,1].$
\section{The fixed point of the mapping ${\bf \wh \Gamma}$} \label{ffixx}

Now we will study the mapping ${\bf \wh\Gamma}$ \eqref{denken}.
 Let $\cM$ be a Nevanlinna family in the Hilbert space $\sM$. Then since
\[
\left|\IM(\left(\cM(\lambda)+\lambda I_\sM)f,f\right)\right|\ge |\IM\lambda|||f||^2,\; \IM\lambda\ne 0,\;f\in\dom\cM(\lambda),
\]
the estimate
\begin{equation} \label{jwtyrf}
||(\cM(\lambda)+\lambda I_\sM)^{-1}||\le \cfrac{1}{|\IM\lambda|},\;\IM\lambda\ne 0.
\end{equation}
holds true.
It follows that $\cM_1(\lambda)=-(\cM(\lambda)+\lambda I_\sM)^{-1}$ is $\bB(\sM)$-valued Nevanlinna function from the class $\cR_0[\sM]$ and, moreover,
$\cM_1(\lambda)=K^*(\wt T-\lambda I)^{-1}K,$ $\IM\lambda\ne 0$, where $\wt T$ is a selfadjoint operator in a Hilbert space $\wt\sH$  and $K\in\bB(\sM,\wt \sH)$ is a contraction, see Corollary \ref{ccorr1} and Proposition \ref{opexpr12}. For $\cM_2(\lambda)=-(\cM_1(\lambda)+\lambda I_\sM)^{-1}$
one has
\[
\lim\limits_{y\to\pm\infty}||iy\cM_2(iy)+I_\sM||=0,
\]
i.e., $\cM_2(\lambda)\in{\cN}[\sM].$ Thus, see Corollary \ref{ccorr1},
\begin{multline*}
\ran{\bf\wh\Gamma}={\bf\wh\Gamma}(\wt R[\sM])=\left\{M(\lambda)\in\cR_0[\sM]: s-\lim\limits_{y\to+\infty}\left(-iy M(iy)\right)\in [0,I_\sM]\right\},\\ \ran{\bf\wh\Gamma}^k\subset{\cN}[\sM],\;k\ge 2.
\end{multline*}

\begin{theorem}
\label{fixpunkt} Let $\sM$ be a Hilbert space. Then
\begin{enumerate}
\item 
the function
\begin{equation} \label{chebysh11}
\cM_0(\lambda)=\cfrac{-\lambda+\sqrt{\lambda^2-4}}{2}\,I_\sM,\;\IM \lambda\ne 0,\; \cM_0(\infty)=0
\end{equation}
 is a unique fixed point of the mapping ${\bf\wh \Gamma}$ \eqref{denken};

\item 
if ${\bf \wh\Gamma}(\cM)=\cM_0$, then $\cM(\lambda)=\cM_0(\lambda)$ for all $\lambda\in\cmr$;

\item
for every sequence of iterations of the form
 $$\cM_1(\lambda)=-(\cM(\lambda)+\lambda I_\sM)^{-1},\;\cM_{n+1}(\lambda)=-(\cM_{n}(\lambda)+\lambda I_\sM)^{-1},\;n=1,2\ldots,$$
 where $\cM(\lambda)$ is an arbitrary Nevanlinna function, the relation
 \[
 \lim\limits_{n\to\infty}||\cM_n(\lambda)-\cM_0(\lambda)||=0
 \]
 holds uniformly on each compact subsets of the open upper/lower half-plane of the complex plane $\dC$;

\item
the function $\cM_0(\lambda)$ is a unique fixed point for each degree of ${\bf\wh\Gamma}$.
\end{enumerate}

 \end{theorem}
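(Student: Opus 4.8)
The plan is to derive all four parts from the dynamics of the scalar linear-fractional map underlying ${\bf\wh\Gamma}$: I first dispose of existence of $\cM_0$ and of (2) by a direct computation, then prove the convergence (3) by an explicit diagonalisation, and finally read off both uniqueness statements from (3). Fix the branch of $\sqrt{\lambda^2-4}$ on $\cmr$ by $\IM\sqrt{\lambda^2-4}>0$ for $\lambda\in\dC_+$ (equivalently $\sqrt{\lambda^2-4}=\lambda+o(1)$ at infinity), and set $z_\pm(\lambda)=\tfrac12(-\lambda\pm\sqrt{\lambda^2-4})$ and $t_\pm=z_\pm+\lambda=\tfrac12(\lambda\pm\sqrt{\lambda^2-4})$, so that $\cM_0=z_+I_\sM$, $z_+z_-=t_+t_-=1$, $z_\pm^2+\lambda z_\pm+1=0$ and $-t_-=z_+$. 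With this branch $\IM z_+>0>\IM z_-$ on $\dC_+$ and $\cM_0(\infty)=0$, so $\cM_0\in\cR_0[\sM]$ is a Nevanlinna function; moreover $-(\cM_0+\lambda I_\sM)^{-1}=-t_+^{-1}I_\sM=-t_-I_\sM=z_+I_\sM=\cM_0$, i.e. ${\bf\wh\Gamma}(\cM_0)=\cM_0$. For (2), if ${\bf\wh\Gamma}(\cM)=\cM_0$ then $(\cM+\lambda I_\sM)^{-1}=-\cM_0=-z_+I_\sM$ is boundedly invertible, whence $\cM+\lambda I_\sM=-z_+^{-1}I_\sM=-z_-I_\sM$ is single-valued and bounded and $\cM=-(z_-+\lambda)I_\sM=-t_-I_\sM=z_+I_\sM=\cM_0$.

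The heart of the argument is (3). Since for every $\lambda\in\cmr$ the bound \eqref{jwtyrf} makes $\cM_1={\bf\wh\Gamma}(\cM)$ a bounded $\bB(\sM)$-valued Nevanlinna function, I may shift the index and assume the starting family is already a bounded Nevanlinna function, uniformly bounded on a fixed compact $K\subset\dC_+$ by $\|\cM_1(\lambda)\|\le1/\inf_K\IM\lambda$. The map ${\bf\wh\Gamma}$ is the linear-fractional action of the scalar matrix $\Phi=\begin{bmatrix}0&-1\\1&\lambda\end{bmatrix}$, and since its entries are scalars the composition of the iterates linearises: $\cM_n=(a_n\cM_1+b_nI_\sM)(c_n\cM_1+d_nI_\sM)^{-1}$ with $\begin{bmatrix}a_n&b_n\\c_n&d_n\end{bmatrix}=\Phi^{\,n-1}$. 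Diagonalising $\Phi$ by its eigenvalues $t_\pm$ (eigenvectors $(z_\pm,1)^{\top}$; here $|t_+|>1>|t_-|$ on $\dC_+$) and writing $q=t_-/t_+$ with $|q|=|t_-|^2<1$, a routine evaluation of $\Phi^{\,n-1}$ yields the identity
$$\cM_n-\cM_0=q^{\,n-1}\bigl((z_+-z_-)\cM_1+(1-z_+^2)I_\sM\bigr)\,D_n^{-1},\quad D_n=(1-q^{\,n-1})\cM_1+(z_+q^{\,n-1}-z_-)I_\sM.$$

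Now the convergence and its uniformity reduce to a uniform lower bound for $D_n$, and this is the step where the Nevanlinna hypothesis enters and which I expect to be the main obstacle. Because $\cM_1(\lambda)$ is dissipative, $\IM\cM_1(\lambda)\ge0$, while $\IM z_-(\lambda)<0$ on $\dC_+$ (indeed $z_++z_-=-\lambda$ forces $\IM z_-\le-\IM\lambda$); hence $\IM(\cM_1-z_-I_\sM)\ge|\IM z_-|\,I_\sM$ and $\|(\cM_1-z_-I_\sM)^{-1}\|\le1/|\IM z_-|$, which stays bounded on $K$ since $\inf_K|\IM z_-|>0$. As $q^{\,n-1}\to0$ uniformly on $K$ (because $\sup_K|q|<1$), $D_n$ is a uniformly small perturbation of $\cM_1-z_-I_\sM$, so a Neumann series gives $\sup_n\sup_K\|D_n^{-1}\|<\infty$; the prefactor $(z_+-z_-)\cM_1+(1-z_+^2)I_\sM$ is bounded on $K$ as well. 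Therefore $\|\cM_n-\cM_0\|\le C_K\,|q|^{\,n-1}\to0$ uniformly on $K$, which is (3); the corresponding statement on compacts of $\dC_-$ follows from $\cM_n(\bar\lambda)=\cM_n(\lambda)^*$.

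Finally, the uniqueness in (1) and the whole of (4) are immediate corollaries of (3). Any fixed point of ${\bf\wh\Gamma}^k$ lies in $\ran{\bf\wh\Gamma}$, hence is a bounded Nevanlinna function, and generates under ${\bf\wh\Gamma}$ a $k$-periodic orbit; by (3) this orbit converges to $\cM_0$, and a periodic sequence that possesses a limit is constant, so every member of the orbit, in particular the fixed point itself, equals $\cM_0$. Taking $k=1$ yields the uniqueness of the fixed point of ${\bf\wh\Gamma}$ asserted in (1), and arbitrary $k$ yields (4).
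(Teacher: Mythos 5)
Your proof is correct, but it takes a genuinely different route from the paper's, most visibly in part (3). The paper proves convergence by the contraction estimate $\|{\bf\wh\Gamma}(\cF)-{\bf\wh\Gamma}(\cG)\|\le|\IM\lambda|^{-2}\|\cF-\cG\|$ (a consequence of \eqref{jwtyrf}), which only yields a Cauchy sequence on the half-planes $\{|\IM\lambda|\ge R\}$ with $R>1$; it then combines the uniform bound $\|\cM_n(\lambda)\|\le|\IM\lambda|^{-1}$ with the Vitali--Porter theorem to upgrade this to uniform convergence on all compacts of $\dC_+/\dC_-$. You instead exploit the fact that ${\bf\wh\Gamma}$ is a linear-fractional map with scalar coefficients, diagonalize the associated matrix $\Phi$, and obtain a closed-form expression for $\cM_n-\cM_0$ with the explicit geometric factor $q^{\,n-1}$, $|q|=|z_+(\lambda)|^2<1$ on all of $\dC_+$; the only analytic input is the lower bound $\IM(\cM_1-z_-I_\sM)\ge|\IM z_-|I_\sM\ge\IM\lambda\, I_\sM$ and a Neumann-series perturbation to control $D_n^{-1}$ uniformly. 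Your approach buys an explicit convergence rate valid on every compact (not just where $|\IM\lambda|>1$) and avoids normal-family arguments entirely, at the price of being tied to the specific M\"obius structure of this particular map, whereas the paper's contraction argument would survive for more general transformations satisfying an estimate of the type \eqref{jwtyrf}. Your derivations of (1)(uniqueness) and (4) as corollaries of (3) via periodic orbits also differ from the paper, which handles (1) by solving the operator quadratic $\cM^2+\lambda\cM+I_\sM=0$ within the Nevanlinna class and (4) by iterating the contraction inequality directly; both are sound, though you should note (as you implicitly do) that a fixed point of ${\bf\wh\Gamma}^k$ is automatically in $\ran{\bf\wh\Gamma}$ and hence a bounded Nevanlinna function, so that (3) applies to it. The remaining small details you leave implicit --- invertibility of $D_n$ for the finitely many small $n$ (which follows from the inductive well-definedness of $\cM_n$ and continuity on the compact $K$), and $|z_+|<1$ on $\dC\setminus[-2,2]$ (from the Joukowski parametrization $\lambda=-(w+w^{-1})$) --- are routine.
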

\begin{proof}
(1)  Since
\[
\cM(\lambda)=-(\cM(\lambda)+\lambda I_\sM)^{-1}\Longleftrightarrow \cM^2(\lambda)+\lambda\cM(\lambda)+I_\sM=0,
\]
and $\cM$ is a Nevanliina family, we get
that $\cM_0$ given by \eqref{chebysh11} is a unique solution.

(2) Suppose ${\bf \wh\Gamma}(\cM)=\cM_0,$ i.e.,
\[
-(\cM(\lambda)+\lambda I_\sM)^{-1}=\cfrac{-\lambda+\sqrt{\lambda^2-4}}{2}\,I_\sM,\; \lambda\in\cmr.
\]
Then
\[
\cM(\lambda)=\left(-\cfrac{2}{-\lambda+\sqrt{\lambda^2-4}}-\lambda\right)I_\sM=\cfrac{-\lambda+\sqrt{\lambda^2-4}}{2}\,I_\sM=\cM_0(\lambda).
\]

(3)
Let $\cF$
 and $\cG$ be two $\bB(\sM)$-valued Nevanlinna functions. Set
$$\wh F(\lambda)=-(\cF(\lambda)+\lambda I_\sM )^{-1},\; \wh G(\lambda)=-(\cG(\lambda)+\lambda I_\sM)^{-1},\; \lambda\in \cmr .$$
Then $\wh F$ and $\wh G$ are $\bB(\sM)$-valued and
\[
\wh F(\lambda)-\wh G(\lambda)=(\cF(\lambda)+\lambda I_\sM )^{-1}\left(\cF(\lambda)-\cG(\lambda)\right)(\cG(\lambda)+\lambda I_\sM )^{-1}.
\]
From \eqref{jwtyrf} we get
\[
||(\wh F(\lambda)-\wh G(\lambda))||\le \cfrac{1}{|\IM\lambda|^2}||\cF(\lambda)-\cG(\lambda)||.
\]
Hence for the sequence of iterations $\{\cM_n(\lambda)\}$ one has
\[
||(\cM_n(\lambda)-\cM_m(\lambda))||\le \cfrac{1}{(|\IM\lambda|^2)^{m-1}}||\cM_{n-m+1}(\lambda)-\cM_1(\lambda)||,\; n>m.
\]
It follows that if $|\IM\lambda|>1$, then
\[
||( \cM_n(\lambda)-\cM_m(\lambda))||\le \cfrac{(|\IM\lambda|^2)^{-m+1}}{1-(|\IM \lambda|)^{-2}}||\cM_{2}(\lambda)-\cM_1(\lambda)||,\; n>m.
\]
Therefore, the sequence of linear operators $\{\cM_n(\lambda)\}_{n=1}^\infty$ convergence in the operator norm topology, and the limit satisfies the equality
$\cM(\lambda)=-(\cM(\lambda)+\lambda I)^{-1}$, i.e., is the fixed point of the mapping ${\bf\wh\Gamma}$. In addition due to the inequality
\[
||(\cM_n(\lambda)-\cM_m(\lambda))||\le \cfrac{1}{R^{m-1}}||\cM_{n-m+1}(\lambda)-\cM_1(\lambda)||,\; n>m, \; |\IM \lambda|\ge R,\; R>1
\]
 we get that the convergence is uniform on $\lambda$ on the domain $\{\lambda:|\IM \lambda|\ge R\}$, $R>1$.

 Note that from
 \[
 ||\cM_n(\lambda)||=||(\cM_{n-1}(\lambda)+\lambda I_\sM)^{-1}||\le \cfrac{1}{|\IM\lambda|},\;\IM\lambda\ne 0
 \]
 it follows that the sequence of operator-valued functions $\{\cM_n(\lambda)\}_{n=1}^\infty$ is uniformly bounded on $\lambda$ on each domain $|\IM\lambda|>r$, $r>0$.
 Thus, the sequence $\{\cM_n\}_{n=1}^\infty$ is locally uniformly bounded in the upper and lower open half-planes and, in addition, $\{\cM_n\}$
 uniformly converges in the operator-norm topology on the domains $\{\lambda:|\IM \lambda|\ge R\}$, $R>1$. By the Vitali-Porter theorem \cite{Schiff} the relation
 \[
 \lim\limits_{n\to\infty}||\cM_n(\lambda)-\cM_0(\lambda)||=0
 \]
 holds uniformly on $\lambda$ on each compact subset of the open upper/lower half-plane of the complex plane $\dC$.

 (4) The function $\cM_0$ is a fixed point for each degree of ${\bf\wh\Gamma}$. Suppose that the mapping ${\bf\wh\Gamma}^{l_0}$, $l_0\ge 2$ has
 one more fixed point $\cL_0(\lambda)$. Then arguing as above, we get
 \[
 ||\cM_0(\lambda)-\cL_0(\lambda)||\le|\IM\lambda|^{-2l_0} ||\cM_0(\lambda)-\cL_0(\lambda)||\; \forall\lambda\in\cmr.
 \]
 It follows that $\cL_0(\lambda)\equiv\cM_0(\lambda)$.
 \end{proof}

The scalar case ($\sM=\dC$) of the next Proposition can be found in \cite[pages 544--545]{Ber}, \cite{CReml}.
 \begin{proposition}\label{gjckt} Let $\sM$ be a Hilbert space.
\begin{enumerate}
\item Consider the weighted Hilbert space
$$\sL_0:=L_2\left([-2, 2],\;\cfrac{1}{2\pi}\sqrt{4-t^2}\right)\otimes\sM $$ and the operator
\[
({\cT}_0 f)(t)=tf(t),\; f(t)\in\sL.
\]
Identify $\sM$ with a subspace of $\sL_0$ of constant vector-functions $\{f(t)\equiv f,\;f\in\sM\}$. Then
\[
\cM_0(\lambda)=P_{\sM}(\cT_0-\lambda I)^{-1}\uphar\sM, \;\lambda\in \dC\setminus [-2,2],
\]
where $\cM_0(\lambda)$ is given by \eqref{chebysh11}.

 \item Let ${\bf H}_0=\bigoplus\limits_{j=0}^\infty\sM=\ell^2(\dN_0)\bigotimes\sM$ and let ${\bf \wh J_0}$ be the operator in ${\bf H}_0$ given by the block-operator Jacobi matrix of the form \eqref{clschope}.

Set $\sM_0:=\sM\bigoplus\{0\}\bigoplus\{0\}\bigoplus\cdots$.
Then $$
\cM_0(\lambda)=P_{\sM_0}({\bf \wh J}_0-\lambda I)^{-1}\uphar\sM_0,\;\lambda\in \dC\setminus [-2,2].
$$
\end{enumerate}
\end{proposition}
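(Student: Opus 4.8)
The plan is to treat the two parts by the same device used in the proof of Proposition~\ref{fixpoint}: first realize $\cM_0$ as the compressed resolvent of a multiplication operator against the weight attached to it (here the semicircle/Wigner weight $\tfrac{1}{2\pi}\sqrt{4-t^2}$ on $[-2,2]$), and then identify that multiplication operator with the block Jacobi matrix ${\bf\wh J}_0$ through the Chebyshev polynomials of the \emph{second} kind. Throughout I would use that $\cM_0$, being the fixed point of ${\bf\wh\Gamma}$, satisfies $\cM_0(\lambda)=-(\cM_0(\lambda)+\lambda I_\sM)^{-1}$ and has the expansion $\cM_0(\lambda)=-\lambda^{-1}+O(\lambda^{-3})$ with $s-\lim_{y\to\infty} iy\,\cM_0(iy)=-I_\sM$; hence $\cM_0\in\cN[\sM]$, it is holomorphic at infinity, and Theorem~\ref{BROD} applies, giving a realization by a bounded $\sM$-simple selfadjoint operator which is unique up to unitary equivalence.

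For part (1) I would compress the resolvent of $\cT_0$ to the subspace $\sM$ of constant functions. For $h\in\sM$ one has $(\cT_0-\lambda I)^{-1}h=\tfrac{h}{t-\lambda}$, and since the weight has total mass one the orthogonal projection onto the constants is integration against the weight, so
\[
P_\sM(\cT_0-\lambda I)^{-1}\uphar\sM=\Bigl(\frac{1}{2\pi}\int_{-2}^{2}\frac{\sqrt{4-t^2}}{t-\lambda}\,dt\Bigr)I_\sM.
\]
The remaining task is the scalar integral, which by the substitution $t=2\cos\theta$ (or a residue computation on $|z|=1$), carried out in \cite{Ber},\cite{CReml}, equals $\tfrac12(-\lambda+\sqrt{\lambda^2-4})$, with the branch of $\sqrt{\lambda^2-4}$ fixed by $\sqrt{\lambda^2-4}=\lambda+O(\lambda^{-1})$ at infinity; this is exactly $\cM_0(\lambda)$. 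Finally $\cT_0$ is $\sM$-simple because polynomials are dense in $\sL_0$, i.e.\ $\cspan\{\cT_0^n\sM:n\in\dN_0\}=\sL_0$.

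For part (2) I would reproduce the Chebyshev argument of Proposition~\ref{fixpoint} with the second-kind polynomials. Setting $V_n(t)=U_n(t/2)$, where $U_n$ are the classical Chebyshev polynomials of the second kind, the change of variable $t=2x$ turns $\tfrac{1}{2\pi}\sqrt{4-t^2}\,dt$ into $\tfrac{2}{\pi}\sqrt{1-x^2}\,dx$, the weight under which the $U_n$ are orthonormal; hence $\{V_n\}_{n=0}^\infty$ is an orthonormal basis of $L_2([-2,2],\tfrac{1}{2\pi}\sqrt{4-t^2})$ with $V_0\equiv 1$. The recurrence $2xU_n=U_{n+1}+U_{n-1}$ becomes $tV_n(t)=V_{n+1}(t)+V_{n-1}(t)$, so the matrix of the scalar multiplication operator in the basis $\{V_n\}$ has zero diagonal and all off-diagonal entries equal to $1$, i.e.\ it is the scalar Jacobi matrix $\wh J_0$ underlying ${\bf\wh J}_0$. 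Tensoring with $I_\sM$ identifies $\cT_0=\wh J_0\otimes I_\sM={\bf\wh J}_0$ by a unitary carrying the constants $\sM$ onto $\sM_0=\sM\oplus\{0\}\oplus\cdots$; combined with part (1) this yields $\cM_0(\lambda)=P_{\sM_0}({\bf\wh J}_0-\lambda I)^{-1}\uphar\sM_0$. Alternatively one computes $((\wh J_0-\lambda)^{-1}\delta_0,\delta_0)=\tfrac12(-\lambda+\sqrt{\lambda^2-4})$ directly and invokes the uniqueness in Theorem~\ref{BROD}(2), $\wh J_0$ being $\delta_0$-simple.

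The computations are all routine; the only points requiring care are the consistent choice of the branch of $\sqrt{\lambda^2-4}$ (pinned down by $\cM_0\sim-1/\lambda$, which also confirms $\cM_0\in\cN[\sM]$) in the integral of part (1), and the bookkeeping of the normalising constants in part (2) that makes the off-diagonal Jacobi entries come out exactly equal to $1$. I expect no genuine obstacle beyond these.
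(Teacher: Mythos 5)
Your proof is correct and follows exactly the route the paper intends: the paper gives no proof of Proposition~\ref{gjckt}, only citing \cite{Ber} and \cite{CReml} for the scalar case, and your argument is the natural adaptation of the paper's own proof of Proposition~\ref{fixpoint} (Stieltjes transform of the normalized weight for the compressed resolvent, then the second-kind Chebyshev recurrence to identify the multiplication operator with ${\bf\wh J}_0\otimes I_\sM$). All the constants check out (the semicircle weight has total mass one, and $tV_n=V_{n+1}+V_{n-1}$ gives off-diagonal entries exactly $1$), so nothing is missing.
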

In the next statement we show that one can construct a sequence $\{\wh \sH_n,\wh A_n\}$ of realizations for the iterates $\{\cM_{n+1}={\bf \wh\Gamma}(\cM_n)\}_{n=1}^\infty$ that inductively converges to $\{{\bf H_0},{\bf\wh J}_0\}.$
\begin{theorem} \label{iteraw}
Let $\cM(\lambda)$ be an arbitrary Nevanlinna family in $\sM$.
Define the iterations of the mapping ${\bf\wh \Gamma}$ \eqref{denken}:
\begin{multline*}
\cM_1(\lambda)=-(\cM(\lambda)+\lambda I_\sM)^{-1},\;
\cM_{n+1}(\lambda)=-(\cM_{n}(\lambda)+\lambda I_\sM)^{-1},\;n=1,2\ldots,
\\\lambda\in\cmr.
\end{multline*}
Let $\cM_1(\lambda)=K^*(\wh T-\lambda I)^{-1}K,$ $\IM\lambda\ne 0$ be a realization of $
\cM_1(\lambda)$, where $\wh T$ is a selfadjoint operator in the Hilbert space $\wh\sH$ and $K\in\bB(\sM, \wh\sH)$ is a contraction.
 Further, set
\begin{multline}\label{chaspaces}
\wh \sH_1=\sM\oplus\wh\sH,\;\wh\sH_2=\sM\oplus\wh\sH_1=\sM\oplus\sM\oplus\wh\sH,\\
\wh\sH_{n+1}=\sM\oplus\sH_n=\underbrace{\sM\oplus\sM\oplus\cdots\oplus\sM}_{n+1}\oplus\wh \sH,\ldots
\end{multline}
and define the following linear operators for each $n\in\dN$:
\[
\begin{array}{l}
\sM\ni x\mapsto\dI^{(n)}_\sM x=[x,\underbrace{0,0,\ldots,0}_{n}]^T\in\wh\sH_n,\\
\wh\sH_n\ni\begin{bmatrix}x\cr h\end{bmatrix}\mapsto P^{(0,n)}_\sM \begin{bmatrix}x\cr h\end{bmatrix}=x\in\sM(\perp\wh\sH_n)\;\forall x\in\sM, \;\forall h\in\wh\sH_n.
\end{array}
\]
Define selfadjoint operators in the Hilbert spaces $\wh\sH_n$ for $n\in\dN$:
\begin{multline}\label{chaoper}
\wh A_1=\begin{bmatrix} 0&K^*\cr K&\wh T\end{bmatrix}:\begin{array}{l}\sM\\\oplus\\\wh \sH\end{array}\to
\begin{array}{l}\sM\\\oplus\\\wh \sH\end{array},\;\dom\wh A_1=\sM\oplus\dom\wh T,\\
\wh A_2=\begin{bmatrix}0&P^{(0,1)}_\sM\cr \dI^{(1)}_\sM&\wh A_1 \end{bmatrix}:\begin{array}{l}\sM\\\oplus\\\wh \sH_1\end{array}\to
\begin{array}{l}\sM\\\oplus\\\wh \sH_1\end{array},\; \dom \wh A_2=\sM\oplus\dom\wh A_1\\
\wh A_{n+1}=\begin{bmatrix}0&P^{(0,n)}_\sM\cr \dI^{(n)}_\sM&\wh A_n \end{bmatrix}:\begin{array}{l}\sM\\\oplus\\\wh \sH_n\end{array}\to
\begin{array}{l}\sM\\\oplus\\\wh \sH_n\end{array},\;
\dom \wh A_{n+1}=\sM\oplus\dom \wh A_n
\end{multline}

 Then $\wh A_n$ is a realization of $\cM_{n+1}$ for each $n$, i.e.,
\begin{equation}
\label{realmnl}
\cM_{n+1}(\lambda)=P_\sM(\wh A_n-\lambda I)^{-1}\uphar\sM,\;\;n=1,2\ldots,\;\lambda\in\cmr.
\end{equation}
If $\wh T$ is $\cran K$-simple, i.e., $\cspan\{(\wh T-\lambda)^{-1}\ran K:\;\lambda\in\cmr\}=\cK$, then $\wh A_n$ is $\sM$-minimal  for each $n\in\dN.$
Moreover, the Hilbert space ${\bf H}_0$ and the block-operator Jacobi matrix \eqref{clschope} are the inductive limits
${\bf H}_0=\lim\limits_{\rightarrow}\wh\sH_n$ and ${\bf\wh J}_0=\lim\limits_{\rightarrow}\wh A_n,$ of the chains $\{\wh\sH_n\}$ and $\{\wh A_n\}$, respectively. \end{theorem}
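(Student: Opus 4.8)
The plan is to establish the three assertions in turn: the realization identity \eqref{realmnl} and the $\sM$-minimality of $\wh A_n$ by induction on $n$, and the inductive-limit claim by exploiting the self-similarity of the construction together with the convergence already obtained in Theorem \ref{fixpunkt}. For \eqref{realmnl} I would induct on $n$, applying the Schur--Frobenius formula \eqref{Sh-Fr2} at each step. In the base case $\wh A_1=\begin{bmatrix}0&K^*\cr K&\wh T\end{bmatrix}$ has $D=0$, so \eqref{Sh-Fr2} gives $P_\sM(\wh A_1-\lambda I)^{-1}\uphar\sM=-\bigl(K^*(\wh T-\lambda I)^{-1}K+\lambda I_\sM\bigr)^{-1}=-(\cM_1(\lambda)+\lambda I_\sM)^{-1}=\cM_2(\lambda)$. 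For the step I would note that in $\wh A_{n+1}=\begin{bmatrix}0&P^{(0,n)}_\sM\cr \dI^{(n)}_\sM&\wh A_n\end{bmatrix}$ the off-diagonal blocks are mutually adjoint, $(\dI^{(n)}_\sM)^*=P^{(0,n)}_\sM$, and $\dI^{(n)}_\sM$ is an isometry; hence \eqref{Sh-Fr2} with $D=0$, $T=\wh A_n$, $K=\dI^{(n)}_\sM$ yields $P_\sM(\wh A_{n+1}-\lambda I)^{-1}\uphar\sM=-\bigl(P^{(0,n)}_\sM(\wh A_n-\lambda I)^{-1}\dI^{(n)}_\sM+\lambda I_\sM\bigr)^{-1}$. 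Since $P^{(0,n)}_\sM(\wh A_n-\lambda I)^{-1}\dI^{(n)}_\sM$ is exactly $P_\sM(\wh A_n-\lambda I)^{-1}\uphar\sM=\cM_{n+1}(\lambda)$ by the inductive hypothesis, the right-hand side equals $-(\cM_{n+1}(\lambda)+\lambda I_\sM)^{-1}=\cM_{n+2}(\lambda)$, closing the induction.

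For the $\sM$-minimality I would induct using Proposition \ref{yjdd}. Applied to $\wh A_1$ it states that $\wh A_1$ is $\sM$-minimal iff $\wh T$ is $\cran K$-simple, which is the hypothesis. Applied to $\wh A_{n+1}$ with $K'=\dI^{(n)}_\sM$, and using $\cran\dI^{(n)}_\sM=\sM$, it states that $\wh A_{n+1}$ is $\sM$-minimal iff $\wh A_n$ is $\sM$-simple. The only supplementary remark needed is that for the selfadjoint operators $\wh A_n$ (each acting on the finite augmentation $\sM^{\oplus n}\oplus\dom\wh T$) the notions of $\sM$-minimality and $\sM$-simplicity coincide, because $-iy(\wh A_n-iyI)^{-1}\to I_{\wh\sH_n}$ strongly as $y\to\infty$ forces $\sM\subset\cspan\{(\wh A_n-\lambda I)^{-1}\sM:\lambda\in\cmr\}$. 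Thus $\sM$-minimality propagates from $\wh A_n$ to $\wh A_{n+1}$, and the chain is anchored by the hypothesis on $\wh T$.

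The inductive-limit statement is the substantive part. The structural fact I would build on is that all the relevant objects obey one and the same recursion: writing $\Phi$ for the operation that prepends a single free Jacobi site, $\Phi(A)=\begin{bmatrix}0&P\cr \iota&A\end{bmatrix}$ with $\iota$ the isometric embedding of $\sM$ at the distinguished site and $P=\iota^*$, one has $\wh A_{n+1}=\Phi(\wh A_n)$, while the Jacobi matrix \eqref{clschope} is a fixed point, ${\bf\wh J}_0=\Phi({\bf\wh J}_0)$ under the unitary shift of $\ell^2(\dN_0)\bigotimes\sM$. Consequently $\wh A_n=\Phi^{n-1}(\wh A_1)$ agrees with ${\bf\wh J}_0$ on the initial free block of sites $0,\dots,n-1$, the discrepancy being only that $\wh T$ is coupled in at site $n-1$ in place of the continuation of the Jacobi pattern. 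Through the isometric embeddings $\iota_n\colon v\mapsto(0,v)$ coming from $\wh\sH_{n+1}=\sM\oplus\wh\sH_n$, the free blocks $\sM^{\oplus n}$ are identified with the initial segments of ${\bf H}_0$, whose union is the dense set of finitely supported sequences on which ${\bf\wh J}_0$ is defined, while the $\wh\sH$-tail is carried to sites $\ge n$ and recedes to infinity along the chain.

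The hard part is precisely this reconciliation, and I expect it to be the main obstacle: consecutive $\wh A_n$ are not unitary restrictions of one another, since $\iota_n\wh A_n=\wh A_{n+1}\iota_n$ fails on the distinguished copy of $\sM$ (the $\wh T$-cap sits at site $n-1$ in $\wh A_n$ but at site $n$ in $\wh A_{n+1}$), so the inductive limit of operators cannot be read off term by term. I would therefore resolve the identification at the level of the compressed resolvents rather than the operators: by Theorem \ref{fixpunkt} one has $\cM_{n+1}(\lambda)\to\cM_0(\lambda)$ in operator norm, locally uniformly on $\cmr$, and by Proposition \ref{gjckt} the limit $\cM_0$ is realized minimally by $\{{\bf H}_0,{\bf\wh J}_0\}$. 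Combined with the minimal realizations $\cM_{n+1}(\lambda)=P_\sM(\wh A_n-\lambda I)^{-1}\uphar\sM$ just established and the uniqueness up to unitary equivalence of minimal realizations in Theorem \ref{BROD}(2), this forces the limit space and operator of the chain to coincide with $({\bf H}_0,{\bf\wh J}_0)$, which is the asserted inductive limit.
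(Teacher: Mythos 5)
Your induction for \eqref{realmnl} via the Schur--Frobenius formula \eqref{Sh-Fr2} is exactly the paper's argument, and your minimality induction via Proposition \ref{yjdd} --- together with the observation that $\sM$-minimality and $\sM$-simplicity coincide for densely defined selfadjoint \emph{operators} because $-iy(\wh A_n-iyI)^{-1}\to I$ strongly --- is correct and in fact more explicit than what the paper records. The gap is in the inductive-limit assertion. You correctly diagnose the difficulty (the $\wh T$-cap sits at a different site in consecutive $\wh A_n$, so the operators do not intertwine with the embeddings), but your resolution --- that the norm convergence $\cM_{n+1}\to\cM_0$ from Theorem \ref{fixpunkt}, Proposition \ref{gjckt}, and the uniqueness of minimal realizations in Theorem \ref{BROD}(2) ``force'' the limit of the chain to be $({\bf H}_0,{\bf\wh J}_0)$ --- is a non sequitur. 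Uniqueness up to unitary equivalence compares two minimal realizations of \emph{one and the same} function; here each $\cM_{n+1}$ is a different function realized in a different space, and no cited result converts convergence of $m$-functions into convergence of their realizations, let alone into the existence and identification of an inductive limit. The statement ${\bf\wh J}_0=\lim_{\rightarrow}\wh A_n$ has no content until the connecting isometries are fixed, and for a general chain the compressed resolvent of a limit operator need not be the limit of the compressed resolvents; that would itself require a strong-resolvent-convergence argument your proposal does not supply.

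What actually closes the argument is elementary and does not use Theorem \ref{fixpunkt} at all. Write $\wh A_n$ in the explicit block form \eqref{jamat}: a free Jacobi block on the first $n$ copies of $\sM$, coupled to $\wh T$ through $K$ only at site $n-1$. Take the connecting isometries $\gamma_k^l[f_1,\ldots,f_k,\varphi]=[f_1,\ldots,f_k,0,\ldots,0,\varphi]$, which fix the front coordinates and insert zeros before the $\wh\sH$-tail (not your $\iota_n\colon v\mapsto(0,v)$, which shifts the distinguished copy of $\sM$ and destroys the identification), together with the partial isometries $\gamma_k[f_1,\ldots,f_k,\varphi]=[f_1,\ldots,f_k,0,0,\ldots]$ into ${\bf H}_0$, which annihilate the $\wh\sH$-component; this is the inductive isometric chain in the sense of Marchenko used in the paper. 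Then for every $h_k\in\wh\sH_k\ominus\wh\sH$ and every $m\ge k+1$ the vector $\gamma_k^m h_k$ vanishes at site $m-1$, so the coupling to $\wh T$ is never activated and $\gamma_m\wh A_m\gamma_k^m h_k={\bf\wh J}_0\gamma_k h_k$ exactly: the sequence stabilizes after finitely many steps. Hence the limit operator is defined on the dense set of finitely supported sequences in ${\bf H}_0$ and its closure is the bounded selfadjoint operator \eqref{clschope}. This direct computation is the substance of the paper's proof of the last assertion and is what your proposal is missing.
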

\begin{proof}
Relations in \eqref{realmnl} follow  by induction from \eqref{Sh-Fr2}.

Note that the operator $\wh A_n$ can be represented by the block-operator matrix
\begin{equation}\label{jamat}
{\wh A_n}=\begin{bmatrix} 0 & I_\sM & 0 &0   & 0 &
\cdot &\cdot &\cdot&0 \\
I_\sM & 0 & I_\sM & 0 &0& \cdot &
\cdot &\cdot&0 \\
0    & I_\sM & 0 & I_\sM &0& \cdot &
\cdot&\cdot&0   \\
0    & 0& I_\sM & 0 & I_\sM &0& \cdot &
\cdot&0
\\
\vdots & \vdots & \vdots & \vdots & \vdots & \vdots & \vdots&\vdots&\vdots\\
0&0&\cdot&\cdot&\cdot&0&0&I_\sM&0\\
0&0&\cdot&\cdot&\cdot&0&I_\sM&0&K^*\\
0&0&\cdot&\cdot&\cdot&0&0&K&\wh T
\end{bmatrix}:\begin{array}{l} n\left\{\begin{array}{l}\sM\\\oplus\\\sM\\\oplus\\\vdots\\\oplus\\\sM\end{array}\right.\\\qquad\oplus\\\qquad\;\wh\sH\end{array}\longrightarrow
\begin{array}{l} n\left\{\begin{array}{l}\sM\\\oplus\\\sM\\\oplus\\\vdots\\\oplus\\\sM\end{array}\right.\\\qquad\oplus\\\qquad\;\wh\sH\end{array}.
\end{equation}
Besides, if $\wh T$ is bounded, then all operators $\{\wh A_n\}_{n\ge 1}$ are bounded and each $\cM_n(\lambda)$ belongs to the class ${\bf N}^0_\sM$ for $n\ge 2$.

Define the linear operators $\gamma_k^l:\wh\sH_k\to\wh\sH_l$, $l\ge k$,  $\gamma_k:\wh\sH_k\to{\bf H_0},$ $k\in\dN$ as follows
\begin{multline}
\label{isomcha}
\gamma_k^l[f_1,f_2,\ldots, f_k,\varphi]=[f_1,f_2,\ldots, f_k,\underbrace{0,0,\ldots,0}_{l-k},\varphi],\\
\gamma_k[f_1,f_2,\ldots, f_k,\varphi]=[f_1,f_2,\ldots, f_k,0,0,\ldots],\\
\{f_i\}_{i=1}^k\subset\sM,\varphi\in\wh\sH.
\end{multline}
Then
\begin{enumerate}
\item $\gamma_k^k$ is the identity on $\wt \sH_k$ for each $k\in\dN$,
\item $\gamma_k^m=\gamma_l^m\circ\gamma_k^l$ if $k\le l\le m,$
\item $\gamma_k=\gamma_l\circ\gamma_k^l,$ $l\ge k,$ $k\in\dN,$
\item ${\bf H}_0=\cspan \{\gamma_k\wh \sH_k,\; k\ge 1\}.$
\end{enumerate}
Note that the operators $\{\gamma_k^l\}$ are isometries and the operators $\{\gamma_k\}$ are partial isometries and $\ker\gamma_k=\wt\sH$ for all $k$.
The family $\{\wh\sH_k,\gamma_k^l,\gamma_k\}$ forms the inductive isometric chain \cite{Marchenko} and the Hilbert space ${\bf H}_0$ is the inductive limit of the Hilbert spaces $\{\wh\sH_n\}$ \eqref{chaspaces}: ${\bf H}_0=\lim\limits_{\rightarrow}\wh\sH_n.$

Define following \cite{Marchenko} on $\cD_\infty:=\bigcup\limits_{n=1}^\infty\gamma_n \dom \wh A_n$
a linear operator in ${\bf H}_0$:
\[
\wh A_\infty h:=\lim\limits_{m\to\infty}\gamma_m\wh A_m\gamma_k^m h_k,\; h=\gamma_k h_k,\;h_k\in\wh\sH_k\ominus\wh\sH,
\]
where $\{\wh A_n\}$ are defined in \eqref{chaoper}.
Due to \eqref{isomcha} and \eqref{jamat} the operator $\wh A_\infty$ exists, densely defined and its closure is bounded selfadjoint operator in ${\bf H}_0$ given by the block-operator matrix ${\bf\wh J_0}$ of the form \eqref{clschope}.

\end{proof}
Note that the operator ${\bf\wh J_0}$ is called the free discrete Schr\"{o}dinger operator \cite{CReml}.
 Observe also that the function
\[
M_1(\lambda)=\cfrac{1}{2}\cM_0\left(\cfrac{\lambda}{2}\right)=2(-\lambda +{\sqrt{\lambda^2-1}})I_\sM, \; \lambda\in\dC\setminus [-1,1],
\]
where $\cM_0(\lambda)$ is given by \eqref{chebysh11}, belongs to the class ${\bf N}_{\sM}^0[-1,1]$.
Besides, for all $\lambda\in \dC\setminus[-1,1]$ the equality $
M_1(\lambda)=P_\sM({\cT}_1-\lambda I)^{-1}\uphar\sM
$
holds, where $\cT_1$ is the multiplication operator $({\cT}_1 f)(t)=tf(t)
$
in the weighted Hilbert space
$$L_2\left([-1,1],\;\cfrac{2}{\pi}\sqrt{1-t^2}\right)\otimes\sM. $$
If $\sM=\dC,$ then the matrix of the corresponding operator $\cT_1$ in the orthonormal basis of the Chebyshev polynomials of the second kind
\[
U_n(t)=\cfrac{\sin[(n+1)\arccos t]}{\sqrt{1-t^2}},\;n=0,1,\ldots
\]
is of the form $\cfrac{1}{2}{\bf \wh J}_0$ \cite{Ber}.

\section{Canonical systems and the mapping ${\bf\wh\Gamma}$}\label{LSEC}
Let $m\in {\bf N}^0_\dC$. Then, see \cite[Chapter VII, $\S$1, Theorem 1.11]{Ber}, \cite{GS}, \cite{CReml}, the function $m$ is the compressed resolvent ($m(\lambda)=\left(\left( J-\lambda I\right)^{-1}\delta_0,\delta_0\right)$) of a unique finite or semi-infinite Jacobi matrix $J=J(\{a_k\},\{b_k\})$
with real diagonal entries $\{a_k\}$ and positive off-diagonal entries $\{b_k\}$ and in the semi-infinite case one has $\{a_k\},\{b_k\}\in\ell^\infty(\dN_0)$.
 Observe that the entries of $J$
can be found using the continued fraction (J-fraction) expansion of $m(\lambda)$ \cite{GS}, \cite{Wall}
\[
m(\lambda)= \frac{-1}{\lambda-a_0}\;\raisebox{-3mm}{{\rm
+}}\;\frac{-b^2_0}{\lambda-a_1}\;\raisebox{-3mm}{{\rm
+}}\;\frac{-b^2_1}{\lambda-a_2} \;\raisebox{-3mm}{{\rm +}\,\ldots}\;
\raisebox{-3mm}{{\rm +}}\;\frac{-b^2_{n-1}}
{\lambda-a_n}\;\raisebox{-3mm}{{\rm +}\,\dots}.
\]
On the other hand the algorithm of I.S.~Kac \cite{Kac1999} enables to construct for given $J(\{a_k\},\{b_k\})$ the Hamiltonian $\cH(t)$ such that the $m$-function of $J(\{a_k\},\{b_k\})$ is the $m$-function of the corresponding canonical system of the form \eqref{cansys}.

Below we give the algorithm of Kac. Let $J$ be a semi-infinite Jacobi matrix
\begin{equation}\label{jacab}
J=J(\{a_k\},\{b_k\})=\begin{bmatrix}a_0 & b_0 & 0 &0   & 0 &
\cdot &\cdot &\cdot \\
b_0& a_1 & b_1 & 0 &0& \cdot &
\cdot &\cdot \\
0    & b_1 & a_2 & b_2 &0& \cdot &
\cdot&\cdot   \\
\vdots & \vdots & \vdots & \vdots & \vdots & \vdots & \vdots&\vdots\end{bmatrix}.
\end{equation}
The condition $\{a_k\},\{b_k\}\in\ell^\infty(\dN_0)$ is necessary and sufficient for the boundedness of the corresponding selfadjoint operator in the Hilbert space $\ell^2(\dN_0)$.

Put
\begin{equation}\label{Kac1}
l_{-1}=1, l_0=1, \;\theta_{-1}=0,\; \theta_0=\cfrac{\pi}{2}.
\end{equation}
  Then calculate
\begin{equation}\label{Kac2}
\begin{array}{l}
\theta_{1}=\arctan a_0+\pi,\;l_1=\cfrac{1}{l_0b^2_0\sin^2(\theta_1-\theta_0)}.\\
\end{array}
\end{equation}
Find $\theta_2$ from the system
\begin{equation}\label{Kac3}
\left\{\begin{array}{l}\cot(\theta_2-\theta_1)=-a_1l_1-\cot(\theta_1-\theta_0)\\
\theta_2\in(\theta_1,\theta_1+\pi)\end{array}\right..
\end{equation}
Find successively $l_j$ and $\theta_{j+1}$, $j=2,3,\ldots$
\begin{equation}
\label{Kac4}
\begin{array}{l}
l_j=\cfrac{1}{l_{j-1}b^2_{j-1}\sin^2(\theta_j-\theta_{j-1})},\\
\left\{\begin{array}{l}\cot(\theta_{j+1}-\theta_j)=-a_jl_j-\cot(\theta_j-\theta_{j-1})\\
\theta_{j+1}\in(\theta_j,\theta_j+\pi)\end{array}\right.
\end{array}.
\end{equation}
Define intervals $[t_j,t_{j+1})$ as follows
\begin{multline}\label{Kac5}
t_{-1}=-1,\; t_0=t_{-1}+l_{-1}=0,\;t_1=t_0+l_0=1,
 \;t_{j+1}=t_j+l_j=1+\sum_{k=1}^j l_k,\;j\in\dN.
\end{multline}
Then necessarily, \cite{Kac1999}, we get that $\lim_{j\to\infty}t_j=+\infty.$
Finally define the right continuous increasing step-function
\begin{equation}\label{Kac6}
\theta(t):=\left\{\begin{array}{l}\theta_0=\cfrac{\pi}{2},\; t\in(t_0,t_1)=(0,1)\\
\theta_j,\;t\in[t_j, t_{j+1}),\;j\in\dN\end{array}\right.
\end{equation}
and the Hamiltonian $\cH(t)$ on $\dR_+$
\begin{multline}\label{Kac7}
\cH(t):=\begin{bmatrix}\cos\theta(t)\cr\sin\theta(t)\end{bmatrix}\begin{bmatrix}\cos\theta(t)&\sin\theta(t)\end{bmatrix}=
\begin{bmatrix}\cos^2\theta(t)&\cos\theta(t)\sin\theta(t)\cr \cos\theta(t)\sin\theta(t)&\sin^2\theta(t)\end{bmatrix}\\
=\cfrac{1}{2}\begin{bmatrix}1&0\cr0&1\end{bmatrix}+\cfrac{1}{2}\begin{bmatrix}\cos 2\theta(t)&\sin 2\theta(t)\cr\sin 2\theta(t)&-\cos 2\theta(t)\end{bmatrix}.
\end{multline}
Then the Nevanlinna function $m(\lambda)=((J-\lambda I)^{-1}\delta_0,\delta_0)$ coincides with $m$-function of the corresponding canonical system of the form \eqref{cansys}.
 Observe that the algorithm shows that
 \begin{equation}\label{vsegda}
 \cH(t)=\begin{bmatrix}0&0\cr 0&1\end{bmatrix},\; t\in[0,1).
\end{equation}
 Using \eqref{Kac1}--\eqref{Kac7} for the Jacobi matrix $\wh J_0$
\[
\wh J_0=\begin{bmatrix} 0 & 1 & 0 &0   & 0 &
\cdot &
\cdot &\cdot \\
1 & 0 & 1 & 0 &0& \cdot &
\cdot &\cdot \\
0    & 1 & 0 & 1 &0& \cdot &
\cdot&\cdot   \\
\vdots & \vdots & \vdots & \vdots & \vdots & \vdots & \vdots&\vdots
\end{bmatrix},
\]
we get
$$l^{0}_j=1,\;\;\theta^{0}_j=(j+1)\cfrac{\pi}{2}\qquad \forall j\in\dN_0,$$
\[
\theta^{0}(t)=(j+1)\cfrac{\pi}{2},\;t\in[j, j+1)\;\forall j\in\dN_0,
\]
\begin{multline}\label{Hamiltoo}
\cH_0(t)=\begin{bmatrix}\cos^2(j+1)\cfrac{\pi}{2}&0\cr 0&\sin^2(j+1)\cfrac{\pi}{2}\end{bmatrix}\\
=\cfrac{1}{2}\begin{bmatrix}1-(-1)^j&0\cr 0&1+(-1)^j\end{bmatrix},\; t\in [j,j+1)\;\forall j\in\dN_0.
\end{multline}

\begin{proposition}
Let the scalar non-rational Nevanlinna function $m$ belong to the class ${\bf N}^0_\dC$. Define the functions
\[
m_1(\lambda)=-\cfrac{1}{m(\lambda)+\lambda}\,,\ldots, m_{n+1}(\lambda)=-\cfrac{1}{m_n(\lambda)+\lambda},\ldots,\; \lambda\in\cmr.
\]
Let $J$ be the Jacobi matrix with the $m$-function $m$, i.e., $m(\lambda)=\left(\left( J-\lambda I\right)^{-1}\delta_0,\delta_0\right),$
$\forall\lambda\in\cmr$.
Assume that $\cH(t)$ is the Hamiltonian such that the $m$-function of the corresponding canonical system coincides with $m$.
Then the Hamiltonian $\cH_n(t)$ of the canonical system whose $m$-function coincides with $m_n,$ takes the form
\begin{multline}\label{Hamilo}
\cH_n(t)=\left\{\begin{array}{l}\cH_0(t),\; t\in[0,n+1),\\
(-1)^{n}\cH(t-n)+\cfrac{1}{2}\begin{bmatrix}1-(-1)^n&0\cr 0&1-(-1)^n\end{bmatrix},\;t\in[n+1,\infty)\end{array}\right.\\
=\left\{\begin{array}{l}\cH_0(t),\; t\in[0,n+1),\\
\begin{bmatrix}\cos^2\left(\theta_j+n\cfrac{\pi}{2}\right)&\cfrac{(-1)^n}{2}\sin 2\theta_j\cr \cfrac{(-1)^n}{2}\sin 2\theta_j&\sin^2\left(\theta_j+n\cfrac{\pi}{2}\right)\end{bmatrix},\; t\in [t_{j}+n,t_{j+1}+n),\;j\in\dN
\end{array}\right.,
\end{multline}
where $\{t_j,\theta_j\}_{j\ge 1}$ are parameters of the Hamiltonian $\cH(t)$.
\end{proposition}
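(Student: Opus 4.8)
The plan is to reduce the whole statement to Kac's algorithm \eqref{Kac1}--\eqref{Kac7} applied to the Jacobi matrix of $m_n$, so the first task is to identify that matrix. From the $J$-fraction expansion of $m$ and the defining relation $m_{k+1}(\lambda)=-1/(m_k(\lambda)+\lambda)$ one reads off that passing from $m_k$ to $m_{k+1}$ prepends a new first row and column with diagonal entry $0$ and off-diagonal entry $1$:
\[
m(\lambda)=\cfrac{-1}{\lambda-a_0+\cfrac{-b_0^2}{\lambda-a_1+\cdots}}
\quad\Longrightarrow\quad
m_1(\lambda)=\cfrac{-1}{\lambda+m(\lambda)}=\cfrac{-1}{\lambda-0+\cfrac{-1}{\lambda-a_0+\cfrac{-b_0^2}{\cdots}}}.
\]
Iterating from $J=J(\{a_k\},\{b_k\})$ gives that $m_n$ is the $m$-function of
\[
J_n=J(\{\underbrace{0,\dots,0}_{n},a_0,a_1,\dots\},\{\underbrace{1,\dots,1}_{n},b_0,b_1,\dots\}),
\]
which is precisely the scalar specialization of \eqref{jamat} with $\wh T=J$ and $K=\delta_0$. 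Since $m\in{\bf N}^0_\dC$ the matrix $J$ is bounded, hence so is each $J_n$, and by the de Branges correspondence the Kac Hamiltonian of $J_n$ is the Hamiltonian $\cH_n$ in question.

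Next I would run the Kac recurrence on $J_n$, writing $\wt\theta_j,\wt l_j,\wt t_j$ for its output and $\theta_j,l_j,t_j$ for the output of the original $J$. During the first $n$ steps the coefficients of $J_n$ coincide with those of the free matrix $\wh J_0$, so the computation leading to \eqref{Hamiltoo} applies verbatim and yields $\wt\theta_j=(j+1)\tfrac{\pi}{2}$, $\wt l_j=1$ for $0\le j\le n$, and $\wt t_j=j$ for $0\le j\le n+1$. Consequently $\wt\theta(t)=\theta^0(t)$ and $\cH_n(t)=\cH_0(t)$ on $[0,n+1)$, which is the first line of \eqref{Hamilo}.

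The heart of the argument is the behaviour past the junction, which I would establish by induction: for every $i\ge1$,
\[
\wt\theta_{n+i}=\theta_i+n\tfrac{\pi}{2},\qquad \wt l_{n+i}=l_i,\qquad \wt t_{n+i}=n+t_i.
\]
The base case uses the identity $\cot(\theta_1-\theta_0)=-a_0$, which follows from $\theta_1=\arctan a_0+\pi$ and $\theta_0=\tfrac{\pi}{2}$ via $\cot(x+\tfrac{\pi}{2})=-\tan x$: evaluating the Kac step \eqref{Kac4} at $j=n$ with $\wt a_n=a_0$, $\wt l_n=1$, $\wt\theta_n-\wt\theta_{n-1}=\tfrac{\pi}{2}$ gives $\cot(\wt\theta_{n+1}-\wt\theta_n)=-a_0=\cot(\theta_1-\theta_0)$, and since both angle differences lie in $(0,\pi)$ they are equal, so $\wt\theta_{n+1}=\theta_1+n\tfrac{\pi}{2}$; the value of $\wt l_{n+1}$ then follows from the first line of \eqref{Kac4}. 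For the inductive step the crucial observation is that \eqref{Kac4} depends on the angles only through the consecutive differences $\theta_{j+1}-\theta_j$, so a global shift of all angles by the constant $n\tfrac{\pi}{2}$ leaves every equation unchanged while the coefficients $a_i,b_i$ re-enter exactly as in the original recurrence for $J$. I expect this junction bookkeeping -- matching the free tail to the shifted copy of the original recurrence -- to be the only genuine obstacle; once it is set up correctly, everything else is forced.

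Finally I would translate the angle data into the Hamiltonian. On $[t_i+n,t_{i+1}+n)=[\wt t_{n+i},\wt t_{n+i+1})$ the step-angle is $\wt\theta_{n+i}=\theta_i+n\tfrac{\pi}{2}$, and substituting into \eqref{Kac7} together with $\sin(2\theta_i+n\pi)=(-1)^n\sin2\theta_i$ produces
\[
\cH_n(t)=\begin{bmatrix}\cos^2\!\left(\theta_i+n\tfrac{\pi}{2}\right)&\tfrac{(-1)^n}{2}\sin2\theta_i\\ \tfrac{(-1)^n}{2}\sin2\theta_i&\sin^2\!\left(\theta_i+n\tfrac{\pi}{2}\right)\end{bmatrix},
\]
which is the second expression in \eqref{Hamilo}. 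To obtain the first (reflected) form I would separate parities: for even $n$ the shift is by a multiple of $\pi$, so $\cH_n(t)=\cH(t-n)$; for odd $n$ the shift by $\tfrac{\pi}{2}$ interchanges $\cos^2\leftrightarrow\sin^2$ and flips the sign of the off-diagonal entry, which is exactly $\begin{bmatrix}1&0\cr0&1\end{bmatrix}-\cH(t-n)$. Both cases are summarised by $(-1)^n\cH(t-n)+\tfrac12\left(1-(-1)^n\right)\begin{bmatrix}1&0\cr0&1\end{bmatrix}$, completing the proof.
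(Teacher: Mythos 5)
Your argument is correct and follows the paper's own route: identify the bordered Jacobi matrices $J_n$ (the paper does this via the Schur--Frobenius formula \eqref{Sh-Fr2} and \eqref{seqjac} rather than the $J$-fraction, but the resulting entries \eqref{entrn} are the same) and then run Kac's algorithm, matching the free initial segment on $[0,n+1)$ and the shifted copy of the original recurrence on the tail. The only discrepancy is that your tail angles $\wt\theta_{n+j}=\theta_j+n\frac{\pi}{2}$ differ by $\pi$ from the paper's stated $\theta^{(n)}_{n+j}=\theta_j+(n+2)\frac{\pi}{2}$; since \eqref{Kac7} depends on $\theta$ only modulo $\pi$, this is immaterial for \eqref{Hamilo}, and your value appears to be the literal output of the recursion \eqref{Kac4}.
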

\begin{proof}
Set
\begin{equation}\label{seqjac}
J_1=\left[\begin{array}{c|c}0&\begin{array}{cccc}1&0&0&\ldots
\end{array}\\
\hline\begin{array}{c}1\cr
0\cr\vdots\end{array}&J\end{array}\right],\ldots, J_n=\left[\begin{array}{c|c}0&\begin{array}{cccc}1&0&0&\ldots
\end{array}\\
\hline\begin{array}{c}1\cr
0\cr\vdots\end{array}&J_{n-1}\end{array}\right],\ldots.
\end{equation}
Then \eqref{Sh-Fr2} and induction yield the equalities
\begin{multline*}
\left((J_1-\lambda I)^{-1}\delta_0,\delta_0\right)=-(m(\lambda)+\lambda)^{-1}=m_1(\lambda),\ldots,\\
\left((J_{n}-\lambda I)^{-1}\delta_0,\delta_0\right)=-(m_{n-1}(\lambda)+\lambda)^{-1}=m_n(\lambda),\dots,
\;\lambda\in\cmr.
\end{multline*}
Let $J=J\left(\{a_k\}_{k=0}^\infty,\{b_k\}_{k=0}^\infty\right)$ be of the form \eqref{jacab}. Then from \eqref{seqjac}
it follows that for the entries of $J_n=J_n\left(\{a^{(n)}_{k}\}_{k=0}^\infty,\{b^{(n)}_{k}\}_{k=0}^\infty\right),$ $n\in\dN,$ we have the equalities
\begin{equation}\label{entrn}
\left\{\begin{array}{l} a^{(n)}_0=a^{(n)}_1=\cdots=a^{(n)}_{n-1}=0\\
a^{(n)}_{k}=a_{k-n},\; k\ge n\end{array}\right.,\; \left\{\begin{array}{l} b^{(n)}_0=b^{(n)}_1=\cdots=b^{(n)}_{n-1}=1\\
b^{(n)}_{k}=b_{k-n},\; k\ge n\end{array}\right..
\end{equation}
In order to find an explicit form of the Hamiltonian corresponding to the Nevanlinna function $m_n$ we apply the algorithm of Kac described
by \eqref{Kac1}, \eqref{Kac2}, \eqref{Kac3}, \eqref{Kac4}, \eqref{Kac5}, \eqref{Kac6}, \eqref{Kac7}.
Then we obtain
\[
\begin{array}{l} l^{(n)}_{-1}=l^{(n)}_0=l^{(n)}_1=\ldots=l^{(n)}_{n}=1,\\
\theta^{(n)}_{-1}=0,\; \theta^{(n)}_0=\cfrac{\pi}{2},\; \theta^{(n)}_1=\pi,\ldots, \theta^{(n)}_{n}=(n+1)\cfrac{\pi}{2},\\
l^{(n)}_{n+j}=l_{j},\; \theta^{(n)}_{n+j}=\theta_{j}+(n+2)\cfrac{\pi}{2},\; j\in\dN.
\end{array}
\]
Hence \eqref{Kac7} and \eqref{Hamiltoo} yield \eqref{Hamilo}.
\end{proof}

By Theorem \ref{fixpunkt} the sequence $\{m_n\}$ of Nevanlinna functions converges uniformly on each compact subset of $\dC_+/\dC_-$ to the Nevanlinna function
\[
m_0(\lambda)=\cfrac{-\lambda+\sqrt{\lambda^2-4}}{2},\;\lambda\in\cmr.
\]
This function is the $m$-function of the Jacobi matrix $\wh J_0$ and the $m$-function of the canonical system with the Hamiltonian $\cH_0$.
From \eqref{seqjac} we see that for the  sequence of selfadjoint  Jacobi operators $\{J_n\}$ in $\ell^2(\dN_0)$ the relations
\[
P_nJ_{n+1}P_n =P_nJ_0P_n\qquad \forall n\in\dN_0
\]
 hold,
where $P_n$ is the orthogonal projection in $\ell^2(\dN_0)$ on the subspace
\[
E_n=\span\{\delta_0, \delta_1,\ldots,\delta_{n-1}\}.
\]
It follows that
\[
s-\lim\limits_{n\to\infty}P_nJ_{n+1}P_n=\wh J_0.
\]
For the sequence \eqref{Hamilo} of $\{\cH_n\}$ one has
\begin{equation}
\label{posled} \cH_n\uphar[0,n+1)=\cH_0\uphar [0,n+1)\qquad\forall n.
\end{equation}
From \eqref{posled} it follows that if $\vec f(t)=\begin{bmatrix}f_1(t)\cr f_2(t)\end{bmatrix}$ is a continuous function on $\dR_+$ with a compact support, then there exists $n_0\in\dN$ such that
$\int\limits_0^\infty\vec f(t)^*\cH_n(t)\vec f(t)dt=\int\limits_0^\infty\vec f(t)^*\cH_0(t)\vec f(t)dt$ for all $n\ge n_0$.

 It is proved in \cite[Proposition 5.1]{Hur2016} that for a sequence of canonical systems with Hamiltonians $\{H_n\}$ and $H$ the convergence $m_{H_n}(\lambda)\to m_{H}(\lambda),$ $n\to\infty$ of $m$-functions holds locally uniformly on $\dC_+/\dC_-$ if and only if
$
\int\limits_0^\infty\vec f(t)^*H_n(t)\vec f(t)dt\to\int\limits_0^\infty\vec f(t)^*H(t)\vec f(t)dt
$
for all continuous functions $\vec f(t)$ with compact support on $\dR_+$.

In conclusion we note that the equalities \eqref{vsegda}, \eqref{Hamiltoo}, and \eqref{Hamilo} (for $n=1$) show that for the transformation ${\bf\wh\Gamma}$ one has the following scheme:
\begin{multline*}
{\bf N}^0_\dC\ni m\;\mbox{(non-rational)}\longrightarrow\cH(t)\Longrightarrow\\
\cH_{\bf\wh\Gamma}(t)=\left\{\begin{array}{l} \cH_0(t),\; t\in[0,2)\\
\begin{bmatrix}1&0\cr 0& 1\end{bmatrix}-\cH(t-1),\; t\in[2,+\infty)\end{array}\right.\longleftarrow{\bf\wh \Gamma}(m).
\end{multline*}

\end{document}